\newtheorem{theorem}{Theorem}[section]
\newtheorem{lemma}[theorem]{Lemma}
\title{The Exceptional Selfcondensability\\ of Powers of Five}
\author{O.M. Cain \\ onnomawcain@gmail.com}
\date{October 2019}
\begin{document}

\maketitle

\textbf{Abstract.} We show any power of $5$ may be expressed arithmetically with the digits of its decimal representation. We also show powers of $5$ (in decimal) contain any amount of zeros in a row.

\section{The Condensing Game}
\begin{center}
    ``\textit{... the soul passes out of hypotheses, and goes up to a principle which is above hypotheses, making no use of images as in the former case, but proceeding only in and through the ideas themselves.}" \\
    -Plato, \textit{The Republic, Book VI} \\\ \\
    ``\textit{... the Romance of Mathematics has dissolved before our eyes.}" \\
    -Lakoff \& N\'u\~nez, \textit{Where Mathematics Comes From} \\
\end{center}

It is often great fun determining whether some value can be created arithmetically from some given list of numbers. For instance, can we make $30$ from four $4$'s? Spoiler, yes:
$$\frac{(4+\frac{4}{4})!}{4}=\frac{5!}{4}=5\cdot3\cdot 2\cdot 1=30.$$
Can you solve $31$?

We are going to use the verb \textit{condensing} to mean playing this game of arithmetic rearrangement. So in the previous example we would say four $4$'s have been \textit{condensed} into $30$. The verb  nicely communicates the feeling of boiling down a soup of many numbers into a single result.

A lot of people have analyzed the Condensing Game for four $4$'s so we will trek a different trail\footnote{See, for example, Paul Bourke's solution list at
\texttt{http://paulbourke.net/fun/4444/}}. We are interested in numbers condensed from their own digits such as
$$25 = 5^2\quad\text{or}\quad 2187=(2+1^8)^7.$$
Numbers for which this is possible we call \textit{selfcondensable}\footnote{Other candidates besides \textit{condensing} were \textit{composing} and \textit{assembling} with alternatives \textit{recomposable} and \textit{autosemblable} to \textit{selfcondensable}.}. And to be clear, when we talk about a number's own digits, we mean the digits of its decimal representation (as opposed to binary or hexadecimal). An analysis of selfcondensabilty in other bases would be a wonderful excursion to meander down but we won't take that trail here.

Selfcondensable numbers are very rare for small numbers (say numbers $7$ digits or less). Perfect powers are usually exceptions to this observation but some families of powers don't become selfcondensable until very large. For example, $10^n$ doesn't seem to become selfcondensable until
$$10^{11}=100000000000=((1+0!)((0!+0!+0!)!-0!))^{(0!+0!+0!)!(0!+0!)-0!}.$$
Remember that $0!=1$ for some reason\footnote{See Numberphile: \texttt{https://www.youtube.com/watch?v=Mfk\_L4Nx2ZI}}.

And now we reach a bend from which our summit can be seen. Surprisingly, the powers of five all seem to be selfcondensable:
$$25=5^2,\quad 125=5^{1+2},\quad 625=5^{6-2},\quad 3125=5^{3\cdot 1 + 2},\quad ...$$
Condensing $5^n$ seems to get easier as $n$ gets bigger. Maybe it goes on forever? But we want a proof that all powers of five -- all infinitely many of them -- are selfcondensable. We will create such a proof in sections 5, 6, and 7. We must also play a bit with language in section 3.

Before closing the introduction, there is another trail we won't be taking but which deserves mention. Some numbers can be condensed into the same value in different ways. Try condensing $2,3,$ and $5$ for a minute or so seeing what values one can create. Find $13$? There are a few routes to it:
$$2^3+5=3\cdot 5 - 2=2\cdot 5 + 3=13.$$
It would be interesting to determine which target values have the most distinct condensings (not counting the order of addition or multiplication or things of that kind as distinct). Are there any $3$ numbers which can be condensed into a single target value in $4$ or more distinct ways? Or is $13$ a record-holder for $3$ starting values?

\section{Da Rules}
\begin{center}
    ``\textit{True life knows and rules itself}" \\
    -George MacDonald, \textit{Unspoken Sermons}
\end{center}
We should probably nail down what exactly is allowed in the Condensing Game before going any further. We will use six rules.
\begin{enumerate}
    \item Addition (ex. $3+15=18$)
    \item Subtraction (ex. $3-15=-12$)
    \item Multiplication (ex. $3\cdot15=45$)
    \item Division (ex. $\frac{15}{3}=5$)
    \item Powers (ex. $15^3=3375$)
    \item Factorial (ex. $3!=6$)
\end{enumerate}

The Condensing Game is usually played with more rules than these (logarithms, square roots, concatenation, decimals, ...). For example, condensing five $3$'s into $37$ has two solutions using concatenation\footnote{See \textit{Moscow Puzzles} [1] problem 77B},
$$37\quad=\quad33+3+\frac{3}{3}\quad=\quad\frac{333}{3\cdot3},$$
and a third with decimals,
$$37=3\cdot3\cdot3+\frac{3}{.3}.$$
There is a solution with only our six rules (in fact, it uses only five). Can you find it? Is there a solution with just four of our rules?

The Condensing game is usually too easy -- and therefore boring -- with a lot of rules. For example, can you condense $100$ from six identical digits? If we have concatenation, this is easy since
$$\frac{777-77}{7}=100$$
holds true for any digit\footnote{See \textit{Moscow Puzzles} [1] problem 77C} (not just $7$). There is an easy solution if we have decimals as well,
$$\Big(\frac{7}{.7}\Big)^{\frac{7}{7}+\frac{7}{7}}=100.$$
But what about our rules? Now the problem is interesting since
$$(2+2+2+2+2)^2\quad=\quad4!\cdot4+4+\frac{4-4}{4}\quad=\quad5!-5\cdot(5-(5-5)!^5)$$
$$=\quad7\cdot(7+7)+\frac{7+7}{7}\quad=\quad9\cdot9+9+9+\frac{9}{9}\quad=\quad100$$
appear the only solutions. Is there a solution for $1,3,6,$ or $8$?

There is another case in which the game is too easy\footnote{See Numberphile: \texttt{https://www.youtube.com/watch?v=Noo4lN-vSvw}}. With just square roots, logarithms, and division we can condense any value from four $4$'s:
$$n=\log_{\sqrt{4}/4}\log_4\underbrace{\sqrt{\sqrt{\cdots\sqrt4}}}_{n}.$$
The trick works because we can nest as many square roots as needed. Similarly, if we have instead the natural logarithm we can condense any value from any four equal numbers (say, $m,m,m$ and $m$):
$$n=\ln\left[\frac{\ln\underbrace{\sqrt{...\sqrt{m}}}_n}{\ln m}\right]/\ln\left[\frac{\ln\sqrt{m}}{\ln m}\right].$$

The six rules above are chosen because they strong enough to get the job done but not too strong that the game becomes boring. In fact, they may make things too easy. In Section 8 we will try to do without rules \#5 and \#6.

\section{Using ``Multiset" in a Sentence}
\begin{center}
    ``\textit{You see, this quadrinomial has been improperly factored. He forgot to double cube root the bottom partial nominator.}" \\
    -Betty Grof, \textit{Adventure Time}
\end{center}

A large vocabulary -- like all good things -- can be misused. Familiarity with a lot of words lets one speak clearly but can also hatch into that horrible dragon: \textit{Jargon}. To continue our conversation about condensability we should add a word to our mathematical vocabulary. And we do so with trembling in fear do not understand what we say.

The word \textit{multiset} is roughly self-explanatory. It means a set of numbers where each element can appear multiple times instead of just once or none at all. For example, $\{7,2,6\}$ is a set but $\{2,6,2,7,7,2\}$ is a \textit{multiset}. We say that $2$ has a \textit{multiplicity} of $3$ in this particular multiset since it appears $3$ times.

We should know also how to add and subtract multisets. Here is an example of each:
$$\{7,6,2\}+\{3,7,7,2\}=\{2,2,3,6,7,7,7\}$$
$$\{2,6,2,7,7,2\}-\{7,6,2\}=\{2,2,7\}$$
In English, adding or subtracting multisets means adding or subtracting the multiplicities of their elements. These examples are something like how one might use mutlisets in a sentence.

We should note when subtracting, one might create \textit{negative multiplicities}. We simply leave such an oddity undefined and avoid it entirely (like dividing by zero).

Lastly, for a multiset $S$, we will use the symbol ``$[S]$" to mean the \textit{set} of elements of $S$. For example, if $S=\{2,2,3,6,7,7,7\}$ then $[S]=\{2,3,6,7\}$.

\section{Made-up Words}
\begin{center}
    ``\textit{A sovereign shame so elbows him}" \\
    -Shakespeare, \textit{King Lear}\\\ \\
    ``\textit{Shmow-zow!}" \\
    -Finn Mertens, \textit{Adventure Time}
\end{center}

It is not always enough to add existing words to one's vocabulary since when a totally new concept is to be communicated, there is no language at first to describe it. So for any significantly novel idea one must create language. This is done in one or both of two ways. 1) Entirely new words are invented. Shakespeare, for example, is considered to have invented the word ``elbow". And remember every word was at some point spoken for the first time. Or 2) we can give special meanings to existing words. 

This second case is exactly what we experienced in high school math with the words ``divisor" and ``imaginary". Similarly, In group theory (a particular branch of math), the words ``orbit" and ``generator" are of this sort. Interestingly, there are some words which gain a special mathematical meaning which outlives the word's original conversational meaning. That is what happened with ``matrix". The word originally meant a place of development or origin. It came from the French \textit{matrice} meaning a pregnant animal (for example one 19th century author calls the human mind a ``thought matrix"\footnote{``\textit{he speaks ... to the thought-matrix}" \\-Unspoken Sermons II (1885)}). The same is true of the word ``quaternion"\footnote{``\textit{the mail-coach was moving away with a fresh quaternion of horses}" \\-Robert Falconer (1868)}. It appears surprisingly common for the conversational meaning of a word to die out while the mathematical meaning survives.

The language of mathematicians is a strange blend of English, symbols, made-up words, and code-words of special meaning (``Mathlish" is a good name for the language). But our job here is not philology. We had better return to our present trail. In the first section we gave special meaning to the word ``condensing". Here we will make up ``words" entirely. Or -- to be very technically accurate -- we are making up \textit{mathematical graphemes}.

Given a multiset $S$, we will use the symbol ``$V(S)$" to mean all the values which can be condensed from $S$. For example:
$$V(\{2,5\})=\{2+5,\ 2-5,\ 5-2,\ 2\cdot 5,\ \frac{2}{5},\ \frac{5}{2},\ 2^5,\ 5^2,\ 5!,\ (2+5)!,\ ...\}$$
$$=\{7,\ -3,\ 3,\ 10,\ 0.4,\ 2.5,\ 32,\ 25,\ 120,\ 5040,\ ...\}$$
If any reader craves a rigorous definition we will satiate them with: \\
``$V(S)$ is the smallest set such that 
\begin{enumerate}
    \item If $|S|=1$ then $S\subseteq V(S)$.
    \item If $|S|\ge 2$ then for all multisets $A$ and $B$ such that $A+B=S$
    $$\bigcup_{a\in V(A)}\bigcup_{b\in V(B)}\{a+b,\ a-b,\ b-a,\ ab,\ \frac{a}{b},\ \frac{b}{a},\ a^b,\ b^a\} \subseteq V(S)$$
    \item If $a\in V(S)$ then $a!\in V(S)$."
\end{enumerate}

So the sentence ``$237$ is not selfcondensable" we can now translate equivalently as ``$237\not\in V(\{2, 3, 7\})$". 

Next we will give meaning to the grapheme ``$E_k$". It will refer to all numbers which can be condensed from any $k$ decimal digits. Translated into an equation, this becomes
$$E_k=\bigcap _{\substack{|S|=k\\ [S]\subseteq D}} V(S)$$ 
where $D=\{0,1,2,...,9\}$ is simply the set of decimal digits. For example, we will find out later that any $5$ decimal digits can be condensed into the number $2$. This fact can be stated concisely with ``$2\in E_5$". 

We should note an important fact about $E_k$.

\begin{lemma}
 $E_k\subseteq E_{k+1}$ for $k\ge 1$. Or in English, if a number can be condensed from any $k$ decimal digits, we can be sure it is also condensable from any $k+1$ decimal digits.
\end{lemma}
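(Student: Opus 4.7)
The plan is to take any $n\in E_k$ and any multiset $T$ of $k+1$ decimal digits, and exhibit $n$ as a condensation of $T$. The natural strategy is to remove one digit $d$ from $T$, observe that the remaining multiset $S=T\setminus\{d\}$ has exactly $k$ elements so $n\in V(S)$ by hypothesis, and then absorb $d$ into the condensation of $S$ without changing its value.

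The easy branch of the argument handles any $T$ containing a $0$ or a $1$. If $0\in T$, I take $d=0$: the split $T=S+\{0\}$ combined with the operation $n+0=n$ (permitted since $n\in V(S)$ and $0\in V(\{0\})$) places $n$ in $V(T)$. If $1\in T$, I take $d=1$ and use $n\cdot 1=n$ in the same way.

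When $T$ contains neither $0$ nor $1$, i.e., every digit of $T$ is at least $2$, the naive absorption breaks, because $V(\{d\})=\{d,d!,(d!)!,\ldots\}$ contains no element equal to $0$ or $1$ for $d\geq 2$. Here I would isolate a two-element submultiset $B\subseteq T$ from which $0$ or $1$ can be built --- for example a repeated pair $\{d,d\}$ giving $d-d=0$ or $d/d=1$, or a consecutive pair $\{d_1,d_2\}$ with $d_2-d_1=1$ --- and then use the split $T=(T\setminus B)+B$ to combine something in $V(T\setminus B)$ with the $0$ or $1$ drawn from $V(B)$.

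\textbf{The main obstacle} is that this split leaves $T\setminus B$ with only $k-1$ digits, so the hypothesis $n\in E_k$ does not automatically place $n$ in $V(T\setminus B)$. Bridging this gap is the heart of the proof. I expect the resolution to exploit either the fact that $E_k$ is empty for small $k$ (rendering the lemma vacuous in those cases), or a more careful choice of split producing a nontrivial element $a\in V(T\setminus B)$ and a value $b\in V(B)$ whose combination equals $n$ through one of the eight binary operations beyond the trivial $n+0$ or $n\cdot 1$. A last-resort argument would enumerate the splits of $T$ more aggressively, leveraging the abundance of digits forced by $|T|=k+1$ to guarantee that at least one such pair $(a,b)$ exists.
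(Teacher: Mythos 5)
You correctly isolate the sticking point, but you do not get past it, and the lemma is not proved by your argument. Your strategy is to split $T$ as $(T\setminus B)+B$, manufacture a $0$ or $1$ from $B$, and combine it with an element of $V(T\setminus B)$; as you observe, $T\setminus B$ then has only $k-1$ digits, so the hypothesis $n\in E_k$ gives you nothing about $V(T\setminus B)$. None of your proposed escapes works: $E_k$ is not empty for the relevant $k$ (the paper shows $1\in E_4$, $2\in E_5$, etc., and the lemma is needed precisely in that range), and ``enumerate the splits more aggressively'' is not an argument. The whole difficulty is that any top-level binary split of a $(k+1)$-element multiset leaves the two sides with fewer than $k$ elements each, so the inductive hypothesis can never be applied to either side directly.

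The paper's proof sidesteps this with a different move: instead of splitting off a neutral element, it \emph{merges} two digits into one. Pick $a,b\in T$ with $a\ge b$ and form $S'=T-\{a,b\}+\{a-b\}$. Since $a$ and $b$ are decimal digits with $a\ge b$, the difference $a-b$ is again a decimal digit, so $S'$ is a multiset of exactly $k$ decimal digits and the hypothesis gives $n\in V(S')$. Finally $V(S')\subseteq V(T)$, because any condensation using the leaf $a-b$ can be rewritten to compute $a-b$ from the two original digits. The point you were missing is that the replacement value must itself be a single decimal digit occupying one slot of the multiset --- not a $0$ or $1$ sitting as a separate operand --- which is exactly what keeps the count at $k$ and keeps the ``any $k$ decimal digits'' hypothesis applicable.
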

\begin{proof}
Suppose $n$ can be condensed from any $k$ decimal digits. We must show that given any multiset $S$ of $k+1$ such digits, we can still condense $n$ therefrom. But this is easy! Simply pick two entries in $S$, say $a$ and $b$. If they are not equal suppose $a$ is the larger (i.e. $a\ge b$). We simply subtract $b$ from $a$ making a new multiset, $S'=S-\{a,b\}+\{a-b\}$, having $k$ elements with the property that $V(S')\subseteq V(S).$ But we know from our assumption that $n\in V(S')$.
\end{proof}

Lastly, we define ``$\delta(n)$" to be the smallest number of digits such that we are assured $n$ is condensable therefrom. Rigorously
$$\delta(n)=\min\{k:n\in E_k\}.$$
It will also be useful here to give a formula for the number of digits in an integer's decimal expansion:

\begin{lemma}
    A whole number $n$ has $\lfloor \log_{10}n\rfloor+1$ digits in its decimal representation.
\end{lemma}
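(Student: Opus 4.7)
The plan is to characterize what it means for a whole number to have exactly $d$ digits and then translate this into a statement about $\log_{10}$. Specifically, a positive integer $n$ has $d$ decimal digits if and only if its leading digit occupies the $10^{d-1}$ place, which is equivalent to the double inequality
\[
10^{d-1} \le n < 10^d.
\]
I would justify this either by induction on $d$ or simply by observing that the numbers with exactly $d$ digits are precisely those in the interval $[10^{d-1},\ 10^d - 1]$.

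Next I would apply $\log_{10}$ (which is monotonically increasing on positive reals) to the inequality above, yielding
\[
d - 1 \le \log_{10} n < d.
\]
By the defining property of the floor function, this is exactly the statement $\lfloor \log_{10} n \rfloor = d - 1$, and therefore $d = \lfloor \log_{10} n \rfloor + 1$, as desired.

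I would also briefly note the edge cases. The statement is vacuous or ill-defined at $n = 0$ since $\log_{10} 0$ is undefined, so I would either restrict to $n \ge 1$ or adopt the convention that $0$ has one digit and handle it separately. There is no real obstacle here; the only mild subtlety is making sure the reader accepts the interval characterization of $d$-digit numbers, which I would present as self-evident from the place-value definition of decimal notation.
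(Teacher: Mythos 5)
Your proposal is correct and follows essentially the same route as the paper: both characterize a $d$-digit number by the inequality $10^{d-1}\le n<10^{d}$ (the paper phrases this as ``$d$ is the largest integer with $n\ge 10^{d-1}$''), apply $\log_{10}$, and invoke the defining property of the floor function. Your explicit handling of the $n=0$ edge case is a small addition the paper omits, but the substance is identical.
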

\begin{proof}
    Remember ``$\lfloor\ \rfloor$" is Mathlish for ``round down". Say $n$ has $k$ digits. Then $k$ is the largest integer such that $n\ge 10^{k-1}$. By taking logarithms we see
    $$\log_{10} n\ge =log_{10}(10^{k-1})=k-1$$
    and since $k$ is the \textit{largest integer} with this property, we are assured that the ``$\ge$" will become an ``$=$" if we round down $\log_{10}n$. In other words,
    $$\lfloor \log_{10}n\rfloor=k-1.$$
    Or equivalently, $k=\lfloor \log_{10}n\rfloor+1.$ 
\end{proof}

This is all to say if $\delta(n)\leq \lfloor \log_{10}n\rfloor+1$ then $n$ must be selfcondensable.

\section{Crimps}
\begin{center}
    ``\textit{Crimp: a hold which is only just big enough to be grasped with the tips of the fingers.}" \\
    -Glossary of Climbing Terms, \textit{Wikipedia}
\end{center}

We have now a view of the summit and some good tools. It is time to climb. We first mark out some grab-holds -- or ``lemmas" as they are called. To make things go faster, we will assume that the digits we work with are non-zero. If any zeros are encountered in the wild, they can simply be made non-zero by factorial since $0!=1$.

\begin{lemma}
$\delta(1)\le 4$. Or in other words, any $4$ decimal digits can be condensed into $1$.
\end{lemma}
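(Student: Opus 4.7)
The plan hinges on the identity $1^v = 1$, which holds for every value $v$: whenever some sub-multiset of our four digits condenses to $1$, I can split $S = A + B$ so that $V(A) \ni 1$, and then $1^v \in V(S)$ for any $v \in V(B)$. So it suffices to produce $1$ somewhere, anywhere.

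First I dispose of the easy cases. A digit of $0$ becomes $1$ by $0! = 1$; a digit of $1$ already gives $V(\{1\}) \ni 1$; and a pair of equal digits $a$, $a$ gives $a/a = 1$. In each case the $1^v$ trick finishes. So I may assume the four digits are distinct and lie in $\{2, \ldots, 9\}$. If any two of these four differ by $1$, their subtraction produces $1$ and we are done again by the same trick.

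What survives is the family of $4$-element subsets of $\{2, \ldots, 9\}$ with all consecutive-pair differences (in sorted order) at least $2$. Writing $a < b < c < d$ and using $d - a \le 7$, a brief count forces each of the outer gaps $b - a$ and $d - c$ into $\{2, 3\}$, and rules out both equaling $3$ (that would leave $c - b \le 1$, violating the gap condition). Hence $|(b - a) - (d - c)| \le 1$.

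I then split $\{a, b, c, d\}$ as $A = \{a, b\}$ and $B = \{c, d\}$; the values $b - a \in V(A)$ and $d - c \in V(B)$ are either equal (in which case $(b - a)/(d - c) = 1$) or differ by $1$ (in which case their subtraction is $1$). Either way $1 \in V(\{a,b,c,d\})$. The main obstacle is really the no-consecutive case; the pleasant surprise is that the arithmetic constraints on the gaps collapse it into the two uniform sub-cases just described, so the whole proof reduces to repeated invocations of a single universal identity.
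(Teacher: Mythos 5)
Your proposal is correct, and its skeleton matches the paper's: both dispose of the cases where a digit is $0$ or $1$, where two digits coincide, or where two digits differ by $1$, and both are then left with the four-element subsets of $\{2,\ldots,9\}$ whose sorted gaps are all at least $2$. Where you diverge is the endgame. The paper simply lists the five surviving multisets ($\{2,4,6,8\}$, $\{2,4,6,9\}$, $\{2,4,7,9\}$, $\{2,5,7,9\}$, $\{3,5,7,9\}$) and exhibits an explicit condensing for each, typically of the shape $\bigl(\tfrac{a+b}{c}\bigr)^d$. You instead argue uniformly: since the three gaps are each at least $2$ and sum to $d-a\le 7$, the two outer gaps $b-a$ and $d-c$ lie in $\{2,3\}$ and cannot both be $3$, so they are equal or adjacent, and the split $\{a,b\}+\{c,d\}$ yields $1$ by a single division or subtraction. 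Your gap-counting is right (the gap vector must be a permutation of $(2,2,2)$ or of $(2,2,3)$ with the $3$ not in both outer slots simultaneously), and the uniform argument buys you freedom from hand-verification at the cost of a slightly longer combinatorial preamble; the paper's enumeration is more concrete but leaves the reader to check five identities. Both are complete proofs.
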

\begin{proof}
Suppose we have a multiset, $S=\{a,b,c,d\}$, of four elements. If any element, say $a$, is $1$ we are done since $a^{b+c+d}=1^{b+c+d}=1\in V(S)$. Similarly, if any two elements are equal, say $a=b$, we are done since $\Big(\frac{a}{b}\Big)^{c+d}=1^{c+d}=1\in V(S)$. The same goes if two elements are $1$ apart, say if $a-b=1$.

The only case left is when $a,b,c,$ and $d$ are at least $2$ apart from each other. The only possibilities are
$$\{2,4,6,8\},\quad \{2,4,6,9\},\quad \{2,4,7,9\},\quad \{2,5,7,9\},\quad \text{and} \quad \{3,5,7,9\}$$
which we can take care of with our bare hands:
$$\Big(\frac{2+4}{6}\Big)^8=\Big(\frac{2+4}{6}\Big)^9=(2\cdot4-7)^9=(2\cdot5-9)^7=(9-3-5)^7=1.$$
\end{proof}

\begin{lemma}
If $1\not\in V(\{a,b,c\})$ for $0\le a,b,c\le 9$ then $\{a,b,c\}$ is one of $\{4,6,8\}, \{4,7,9\},$ or $\{5,7,9\}$.
\end{lemma}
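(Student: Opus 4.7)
The plan is to prove the contrapositive: every three-digit multiset of decimal digits outside the three listed exceptions condenses to $1$. The strategy parallels Lemma 5.1 --- sweep away the easy cases with a handful of reductions, then handle a small stubborn list by direct enumeration.

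I would first dispatch the obvious cases just as in Lemma 5.1. If $0 \in \{a,b,c\}$ then $(0!)^{b+c}=1$; if $1 \in \{a,b,c\}$ then $1^{b+c}=1$; if two entries are equal, say $a=b\neq 0$, then $(a/b)^c=1$; and if two entries differ by $1$, say $b-a=1$, then $(b-a)^c=1$. Each of these uses all three digits and witnesses $1 \in V(\{a,b,c\})$.

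What survives these reductions is the set of triples $a<b<c$ of distinct digits in $\{2,\ldots,9\}$ with $b\ge a+2$ and $c\ge b+2$; a quick census produces exactly twenty such multisets. I would arrange them in a short table and, for each non-exceptional triple, exhibit a $\pm,\cdot,/$ expression equal to $1$. Most fall under one of two templates: $a+b-c \in \{-1,1\}$, so the signed sum itself is $\pm 1$ (for instance $\{3,5,7\}$ via $3+5-7=1$), or $c=a+b$, so the quotient $(a+b)/c$ is $1$ (for instance $\{3,6,9\}$). A small residue of three triples --- $\{2,4,8\}, \{2,4,9\}, \{2,5,9\}$ --- needs a multiplicative ingredient, resolved respectively by $8/(2\cdot 4)=1$, $9-2\cdot 4=1$, and $2\cdot 5-9=1$. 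Seventeen of the twenty survivors fall this way, and the three that resist every attempt are exactly $\{4,6,8\}, \{4,7,9\}, \{5,7,9\}$.

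The main obstacle is bookkeeping rather than cleverness. Because the lemma is only a one-way implication, I am not obliged to prove $1 \notin V$ for the three exceptions --- a genuine relief, since ruling out every factorial-and-power expression over those triples would require a subtle invariant (a $2$-adic valuation argument looks tempting for $\{4,6,8\}$, but factorials threaten to spoil it). The real danger is overlooking a survivor in the enumeration, so the write-up should be arranged as a table explicit enough that nothing slips through.
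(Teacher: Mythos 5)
Your plan is correct and follows essentially the same route as the paper: dispatch the cases containing $0$ or $1$, equal digits, or digits one apart, then observe that among the remaining twenty gap-$\ge 2$ triples all but the three listed exceptions have $a+b$ or $ab$ equal to or within $1$ of $c$ (your explicit witnesses all check out, and your count of twenty survivors is right). The paper's proof is just a terser version of the same case analysis, leaving the enumeration implicit, and it likewise does not (and need not) prove that $1\notin V$ for the three exceptional triples.
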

\begin{proof}
As observed in the previous lemma, if any of $a,b,$ or $c$ is $1$, if any of $a,b,$ and $c$ are equal, or if any of $a,b,$ and $c$ are $1$ apart, then we are done. If not, then suppose $a,b,$ and $c$ are sorted so $a<b<c$. If $a+b$ or $ab$ is equal or $1$ apart from $c$ then we are done since one of $\frac{a+b}{c}, \frac{ab}{c}, a+b-c, c-a-b, c-ab, $ or $ab-c$ is equal to $1$. That leaves only $\{4,6,8\}, \{4,7,9\},$ and $\{5,7,9\}$.
\end{proof}

We could have proved Lemma 5.1 with 5.2 if we wanted. But the author chose not to simply because the resulting proof ``felt uglier" than given 5.1 a standalone proof.

\begin{lemma}
$\delta(2)\le 5$.
\end{lemma}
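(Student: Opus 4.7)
The plan is to split the 5-digit multiset into two parts, condense each, and combine their values to produce $2$. First, since $0!=1$, any zero digit can be harmlessly replaced by a one, so I may assume all digits lie in $\{1,\ldots,9\}$.

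If any digit $d$ of the multiset satisfies $d\in\{1,2,3\}$, I would extract $d$ and apply Lemma 5.1 to condense the remaining four digits to $1$. The two-element multiset $\{d,1\}$ then condenses to $2$ via $1+1$, $2\cdot 1$, or $3-1$ respectively. So the real work lies in the case where all five digits belong to $\{4,5,6,7,8,9\}$.

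For that case the strategy is to split the multiset into a pair that condenses to $2$ and a triple that condenses to $1$. The pairs of digits in $\{4,\ldots,9\}$ condensing to $2$ are $\{4,6\}$, $\{5,7\}$, $\{6,8\}$, $\{7,9\}$ (difference two) and $\{4,8\}$ (ratio two), while by Lemma 5.2 every triple other than $\{4,6,8\}$, $\{4,7,9\}$, $\{5,7,9\}$ condenses to $1$. When the five digits are all distinct, the multiset is $\{4,\ldots,9\}$ minus one value, giving six cases; in each case a direct check produces a good pair whose complementary triple is not one of the three bad ones.

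The main obstacle is multisets in $\{4,\ldots,9\}$ with repeated digits, for which this clean split may fail -- either because every available good pair leaves a bad triple, or because the support does not contain a good pair at all (as happens for $\{d,d,d,d,d\}$, whose support is a single element). I would dispatch these exceptional multisets individually with tailored identities, for instance $((d+d)/d)\cdot(d/d)=2$ for a constant multiset, or $(7-4)-(7/7)^7=2$ for $\{4,7,7,9,9\}$. The bulk of the labor is in enumerating this residual list of exceptions and finding an explicit condensing trick for each; I expect the enumeration to be short because the supports avoiding every good pair form a very restricted family of independent sets in a small graph on six vertices.
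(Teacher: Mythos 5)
Your strategy is viable and your treatment of the all-distinct case is correct, but it diverges from the paper's proof and the residual casework is far heavier than you estimate. The paper, after the $\{0,1,2,3\}$ step, splits instead on whether $S$ contains two equal digits or two digits differing by $1$: such a pair condenses to $1$ (via $\frac{a}{b}$ or $a-b$), and the remaining \emph{three} digits either condense to $1$ (giving $1+\mathbf{1}=2$) or form one of the three bad triples of Lemma 5.2, each killed by an explicit identity such as $(4+6-8)\frac{a}{b}=2$; if no such pair exists, five pairwise-non-adjacent digits all $\ge 4$ would force $\max S-\min S\ge 8$ inside $\{4,\dots,9\}$, a contradiction, so there is nothing left to check. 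Your decomposition into a pair condensing to $2$ plus a triple condensing to $1$ obliges you to handle every multiset whose support is an independent set of the good-pair graph on $\{4,\dots,9\}$: there are $19$ such supports ($6$ singletons, $10$ non-edges, and the triples $\{4,5,9\}$, $\{5,6,9\}$, $\{5,8,9\}$), carrying $6\cdot1+10\cdot4+3\cdot6=64$ multisets, plus the single genuine failure of the second kind, $\{4,7,7,9,9\}$ --- sixty-five cases is not a short list if dispatched one at a time. Moreover your sample identity $(7-4)-(7/7)^7$ consumes the multiset $\{4,7,7,7,7\}$, not $\{4,7,7,9,9\}$; for the latter you want something like $(7-4)-\bigl(\frac{9}{9}\bigr)^7=2$. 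The clean repair is to observe that every one of your exceptional multisets necessarily contains a repeated digit (you already verified that all-distinct quintuples admit a good split), and a repeated digit is precisely the hypothesis under which the paper's uniform trick fires --- at which point you have rederived the paper's argument.
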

\begin{proof}
Suppose $S$ a multiset with $5$ elements. If $S$ contains $0,1,2,$ or $3$ we are done since, by the previous lemma, we can condense the other $4$ elements into $1$ and observe in each case
$$0!+1=2,\quad 1+1=2,\quad 2\cdot 1=2,\quad\text{and}\quad 3-1=2.$$

Next if any two elements in $S$ are equal, say $a=b$, we may show $2\in V(S).$ If the remaining $3$ elements can be condensed into $1$ we are done since $\frac{a}{b}+1=2$. If not, they must be one of $\{4,6,8\}, \{4,7,9\},$ or $\{5,7,9\}$ by the previous lemma. But in these cases we condense
$$(4+6-8)\frac{a}{b}=\frac{4a}{(9-7)b}=(7-5)\Big(\frac{a}{b}\Big)^9=2.$$
The same trick can be pulled if $a-b=1$.

That leaves the case in which we have $5$ elements $\ge 4$ all at least $2$ apart. But this is impossible! The fact that $0,1,2,3\not\in S$ tells us
$$\max S-\min S\le 5$$
whereas having all elements $2$ apart tells us
$$\max S-\min S\ge 8,$$
a contradiction. So it turns out we are done.
\end{proof}

\begin{lemma}
$\delta(3),\delta(6)\le 6.$
\end{lemma}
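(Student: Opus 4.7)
The first observation is that $\delta(6) \le \delta(3)$, since $V(S)$ is closed under factorial and $3! = 6$: any multiset condensing to $3$ also condenses to $6$. So the lemma reduces to showing $\delta(3) \le 6$.

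Let $S$ be any multiset of six decimal digits. The plan is a case split, leaning on the previous lemmas. If any element $x$ of $S$ lies in $\{0,1,2,3\}$, I would set it aside: the remaining five digits have both $1$ and $2$ in their $V$-set (by Lemma 5.1 and Lemma 5.3, noting $\delta(1) \le 4 \le 5$). Then one finishes with $0!+2$, $1+2$, $2+1$, or $3 \cdot 1$, whichever matches $x$.

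Otherwise every element of $S$ lies in $\{4,5,6,7,8,9\}$. If some pair $a,b \in S$ differs by $3$, I pull the pair out and use Lemma 5.1 on the remaining four digits to produce a factor of $1$, giving $(a-b) \cdot 1 = 3$. The same trick with a pair differing by $2$ yields $(a-b) + 1 = 2 + 1 = 3$.

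What remains -- and what I expect to be the main obstacle -- is when $S \subseteq \{4,\ldots,9\}$ and no two elements of $S$ differ by $2$ or $3$. A short antichain check in $\{4,\ldots,9\}$ forces $[S]$ into a very short list: a singleton, an adjacent pair $\{a,a+1\}$, one of $\{4,8\}, \{5,9\}, \{4,9\}$, or one of the triples $\{4,5,9\}, \{4,8,9\}$. For most multiplicity profiles the six digits partition into three pairs each condensing to $1$ (via $a/a$ or $(a+1)-a$), whose sum is $3$. The parity-obstructed profiles such as $\{4,4,4,4,4,8\}$ or $\{5,5,5,5,5,9\}$ I would finish with ad hoc expressions like $(8-4)!/8 = 3$ or $9 - 5 - 5/5 = 3$, absorbing any surplus digits as a side factor of $1$ such as $4/4$ or $4^{4-4}$.
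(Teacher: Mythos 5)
Your overall strategy is sound and your reduction $\delta(6)\le\delta(3)$ via $3!=6$ is exactly the paper's. But you organize the case split differently, and this is where the comparison is instructive. The paper peels off \emph{any} digit in $\{0,1,\dots,6\}$ (using $0!+\mathbf2$, $1+\mathbf2$, $2+\mathbf1$, $3\cdot\mathbf1$, $4-\mathbf1$, $5-\mathbf2$, $6/\mathbf2$), so its residual case is only $[S]\subseteq\{7,8,9\}$; it then sorts $S=\{a,b,c,d,e,f\}$ and passes to the difference multiset $S'=\{b-a,\,d-c,\,f-e\}$, whose entries sum to at most $9-7=2$, leaving just four multisets $\{0,0,0\},\{0,0,1\},\{0,0,2\},\{0,1,1\}$, each visibly condensing to $3$. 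You peel off only $\{0,1,2,3\}$ and then hunt for pairs differing by $2$ or $3$, which leaves you a much larger residual family over $\{4,\dots,9\}$. Your antichain computation is correct (the surviving $[S]$ are singletons, adjacent pairs, $\{4,8\},\{5,9\},\{4,9\},\{4,5,9\},\{4,8,9\}$), but the price is a genuinely larger finite check that you have only sketched.

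That sketch is the one real gap. The pairs $\{4,8\},\{5,9\},\{4,9\}$ do \emph{not} condense to $1$, so ``partition into three $1$-pairs'' fails for many multiplicity profiles beyond the two you name --- e.g.\ $\{4,4,4,8,8,8\}$, $\{4,4,4,5,5,9\}$, $\{4,8,9,9,9,9\}$ all need their own expressions --- and one of your two sample fixes, $(8-4)!/8=3$, requires two $8$'s and so does not apply to the profile $\{4,4,4,4,4,8\}$ you attach it to (there $\bigl(8-4-\frac{4}{4}\bigr)\cdot\frac{4}{4}=3$ works). Each residual profile is in fact handleable, so your route does reach the summit, but as written the last case is an unfinished enumeration rather than a proof. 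The paper's sorted-consecutive-differences device is worth internalizing precisely because it collapses this combinatorial explosion to four lines.
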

\begin{proof}
We only need prove the lemma for $\delta(3)$ since $6=3!$. Similarly to the previous lemma, note that if $S$ contains any of $0,1,2,3,4,5,6$ we are done. The other $5$ elements can be condensed into $1$ or $2$ by the previous lemmas. In each case we have
$$0!+\mathbf{2}=1+\mathbf{2}=2+\mathbf{1}=3\cdot\mathbf{1}=4-\mathbf{1}=5-\mathbf{2}=\frac{6}{\mathbf{2}}=3.$$
The $1$'s and $2$'s made by condensing are bold for clarity.

That leaves the case $[S]\subseteq\{7,8,9\}$. Let's assign names to the elements of $S$. Say $S=\{a,b,c,d,e,f\}$ with the elements in sorted (=non-decreasing) order. We make a second multiset of $3$ elements, $S'=\{b-a, d-c, f-e\}$, with the nice property that $V(S')\subseteq V(S)$. The only possibilities for $S'$ are 
$$\{0,0,0\},\quad \{0,0,1\},\quad \{0,0,2\},\quad \text{and}\quad\{0,1,1\}.$$
In all cases $3\in V(S')$ since
$$0!+0!+0!=0!+0!+1=0!\cdot 0! + 2=0!+1+1=3$$
\end{proof}

\begin{lemma}
$\delta(4),\delta(5)\le 7$.
\end{lemma}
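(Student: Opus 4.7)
The plan is to mirror the case-analysis pattern of Lemma 5.6. Given any multiset $S$ of $7$ decimal digits, I will split into two cases: either $S$ contains some ``small'' digit $d\in\{0,1,\ldots,8\}$, or $S$ consists of seven $9$s.

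In the first case I would remove one copy of $d$ from $S$ and condense the remaining $6$ digits using the previous lemmas. Because Lemmas 4.1, 5.3, 5.5, and 5.6 together guarantee that $1$, $2$, $3$, and $6$ all lie in $E_6$, I am free to pick whichever of these auxiliary values combines most naturally with $d$. The recipes will be routine: for target $4$, one can use $0!+3,\ 1+3,\ 2+2,\ 3+1,\ 4\cdot 1,\ 5-1,\ 6-2,\ 7-3,\ 8/2$ as $d$ ranges over $0,\ldots,8$, and there is an analogous menu for target $5$ ($2+3,\ 3+2,\ 4+1,\ 5\cdot 1,\ 6-1,\ 7-2,\ 8-3$ for $d\in\{2,\ldots,8\}$). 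The one subtle point — and the main conceptual obstacle — is the pair $d\in\{0,1\}$ with target $5$: I cannot simply write $5=d+4$, because $\delta(4)\le 6$ is unavailable (indeed it is a strictly stronger statement than what the lemma claims). This is precisely why Lemma 5.6 bothered to record $\delta(6)\le 6$ as well as $\delta(3)\le 6$: it lets me salvage these two cases via $5=6-0!$ and $5=6-1$.

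The second case, $S=\{9,9,9,9,9,9,9\}$, I will handle by direct construction. For target $4$, the formula
\[
4=\frac{9+9+9+9}{9}\cdot\frac{9}{9}
\]
consumes exactly seven $9$s. For target $5$, a short search yields
\[
5=\frac{9\cdot 9+9}{9+9+9-9},
\]
with three $9$s above and four below. Apart from spotting the $\delta(6)\le 6$ trick for $d\in\{0,1\}$, everything else is bookkeeping over a finite list of digits.
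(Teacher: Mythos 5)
Your proposal is correct and follows essentially the same route as the paper: split on whether $S$ contains a digit in $\{0,\dots,8\}$ (then condense the other six digits into $1$, $2$, $3$, or $6$ via the earlier lemmas and monotonicity of $E_k$, using $6-0!$ and $6-1$ for the awkward cases $d\in\{0,1\}$ with target $5$), and handle $S=\{9,9,9,9,9,9,9\}$ by hand. Your explicit seven-nines identities differ cosmetically from the paper's but check out, so the argument stands as is.
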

\begin{proof}
Suppose $|S|=7$. If $S$ contains any of $0,1,...,8$ then $4,5\in V(S)$ since we may condense the other $6$ elements into $1,2,3,$ or $6$ creating
$$0!+\mathbf3=1+\mathbf3=2+\mathbf2=3+\mathbf1=4\cdot\mathbf1=5-\mathbf1=6-\mathbf2=7-\mathbf3=\frac{8}{\mathbf2}=4,$$
$$\mathbf6-0!=\mathbf6-1=2+\mathbf3=3+\mathbf2=4+\mathbf1=5\cdot \mathbf1=6-\mathbf1=7-\mathbf2=8-\mathbf3=5.$$

That leaves us the case $S=\{9,9,9,9,9,9,9\}$ in which case we condense
$$\frac{9+9+9+9}{9}+9-9=4\quad\text{and}\quad \frac{9\cdot9-9-9-9-9}{9}=5.$$
\end{proof}

We should package up these lemmas into a single table.
\begin{center}
\begin{tabular}{ c | c c c c c c}
 $n$ & 1 & 2 & 3 & 4 & 5 & 6 \\ \hline
 $\delta(n)\le$ & 4 & 5 & 6 & 7 & 7 & 6    
\end{tabular}
\end{center}

\section{Belaying}
\begin{center}
    ``\textit{while it had established a rule and order, the chief aim of that order was to give room for good things to run wild.}" \\
    -G.K. Chesterton, \textit{Orthodoxy}\\\ \\
    ``\textit{Belay: something (such as a projection of rock) to which a person or rope is anchored.}" \\
    -\textit{Merriam-Webster}
\end{center}

It would be easy to go on making lemmas for $\delta(7), \delta(8), \delta(9), ...$ and so on. But this is tedious and we have infinitely many numbers to go! Instead we make a single lemma that lets us easily lock down a bound for any $\delta(n)$. The bound won't be the best possible but it will get us up the mountain.

\begin{lemma}
$\delta(a+b), \delta(ab)\le \delta(a)+\delta(b)$ 
\end{lemma}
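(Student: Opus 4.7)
The plan is to unpack the definitions of $\delta$ and $V$ and exploit the recursive clause in the rigorous definition of $V(S)$ from Section~4.

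First I would fix an arbitrary multiset $S$ of decimal digits with $|S| = \delta(a) + \delta(b)$; the goal is to show both $a+b$ and $ab$ lie in $V(S)$, which would place them in $E_{\delta(a)+\delta(b)}$ and give the bounds on $\delta(a+b)$ and $\delta(ab)$. The natural move is to split $S$ into two sub-multisets $A$ and $B$ with $|A| = \delta(a)$ and $|B| = \delta(b)$ and $A + B = S$ (any such split will do, e.g.\ pick $\delta(a)$ of the entries of $S$ with multiplicity and let $B$ be the remainder). Since $[A] \subseteq D$ and $|A| = \delta(a)$, the definition of $\delta(a)$ as $\min\{k : a \in E_k\}$ guarantees $a \in V(A)$; symmetrically $b \in V(B)$.

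Next I would apply clause~2 of the rigorous definition of $V(S)$: because $A + B = S$ is a decomposition of $S$ with $|S| \ge 2$, the set $V(S)$ contains $\{a'+b',\ a'b',\ \ldots\}$ for every $a' \in V(A)$ and $b' \in V(B)$. Plugging in our particular $a \in V(A)$ and $b \in V(B)$ immediately yields $a+b \in V(S)$ and $ab \in V(S)$.

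Since $S$ was arbitrary, both $a+b$ and $ab$ belong to $E_{\delta(a) + \delta(b)}$, so $\delta(a+b) \le \delta(a) + \delta(b)$ and $\delta(ab) \le \delta(a) + \delta(b)$, as claimed.

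There is essentially no obstacle here: the lemma is a direct consequence of how $V$ is defined recursively on multiset partitions, and the only thing to be careful about is handling the edge case where $\delta(a) = 0$ or $\delta(b) = 0$ (which does not occur, since $\delta(n) \ge 1$ for any $n$ appearing in the paper) and noting that the \emph{same} decomposition $A+B=S$ simultaneously witnesses both the sum and the product, so no extra digits are consumed.
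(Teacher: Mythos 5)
Your proposal is correct and is essentially the paper's own argument, just written out with the formal definitions of $V$, $E_k$, and $\delta$ made explicit: the paper also splits the $\delta(a)+\delta(b)$ digits into two blocks, condenses $a$ from one and $b$ from the other, and combines them with addition or multiplication (illustrating with $\delta(11)\le\delta(5)+\delta(6)$). Your version is a faithful formalization of that same decomposition argument, so there is nothing further to compare.
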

\begin{proof}
The symbols above become self-explanatory when we take a particular example. For example, we can condense the number $6$ from any six digits and the number $5$ from any seven digits. That means we can condense $11$ or $30$ from any thirteen digits. For $30$, condense a $5$ from seven of the thirteen digits. Use the remaining six digits to condense the number $6$. Then simply multiply $5\cdot6=30$. In short, we say $\delta(11)=\delta(5+6)\le \delta(5)+\delta(6)=7+6=13.$

In general, if we can condense $a$ from any $k$ digits and $b$ from any $m$ digits, then we can condense $a+b$ and $ab$ from $k+m$ digits. 
\end{proof}

Sometimes there are multiple ways to apply the lemma,
$$\delta(8)=\delta(2\cdot4)\le \delta(2)+\delta(4)=5+7=12$$
$$\delta(8)=\delta(2+6)\le \delta(2)+\delta(6)=5+6=11,$$
in which case we take the best result: $\delta(8)\le 11$.

We can now make a much larger table of $\delta(n)$ inching our way up:
\begin{center}
    \begin{tabular}{c|c}
        $n$ & $\delta(n)\le$ \\ \hline
        $1$ & $4$ \\
        $2$ & $5$ \\
        $3$ & $6$ \\
        $4$ & $7$ \\
        $5$ & $7$ \\
        $6$ & $6$ \\
        $7$ & $10$ \\
        $8$ & $11$ \\
        $9$ & $12$ \\
        $10$ & $12$ \\
    \end{tabular}\quad\quad
    \begin{tabular}{c|c}
        $n$ & $\delta(n)\le$ \\ \hline
        $11$ & $13$ \\
        $12$ & $11$ \\
        $13$ & $15$ \\
        $14$ & $15$ \\
        $15$ & $13$ \\
        $16$ & $14$ \\
        $17$ & $18$ \\
        $18$ & $12$ \\
        $19$ & $16$ \\
        $20$ & $14$ \\
    \end{tabular}\quad\quad
    \begin{tabular}{c|c}
        $n$ & $\delta(n)\le$ \\ \hline
        $21$ & $16$ \\
        $22$ & $18$ \\
        $23$ & $19$ \\
        $24$ & $13$ \\
        $25$ & $14$ \\
        $26$ & $18$ \\
        $27$ & $18$ \\
        $28$ & $17$ \\
        $29$ & $20$ \\
        $30$ & $13$ \\
    \end{tabular}
\end{center}
\begin{center}
    \begin{tabular}{c|c}
        $n$ & $\delta(n)\le$ \\ \hline
        $31$ & $17$ \\
        $32$ & $18$ \\
        $33$ & $19$ \\
        $34$ & $20$ \\
        $35$ & $17$ \\
        $36$ & $12$ \\
        $37$ & $16$ \\
        $38$ & $17$ \\
        $39$ & $18$ \\
        $40$ & $18$ \\
    \end{tabular}\quad\quad
    \begin{tabular}{c|c}
        $n$ & $\delta(n)\le$ \\ \hline
        $41$ & $19$ \\
        $42$ & $16$ \\
        $43$ & $20$ \\
        $44$ & $20$ \\
        $45$ & $19$ \\
        $46$ & $23$ \\
        $47$ & $23$ \\
        $48$ & $17$ \\
        $49$ & $20$ \\
        $50$ & $19$ \\
    \end{tabular}\quad\quad
    \begin{tabular}{c|c}
        $n$ & $\delta(n)\le$ \\ \hline
        $51$ & $23$ \\
        $52$ & $22$ \\
        $53$ & $24$ \\
        $54$ & $18$ \\
        $55$ & $20$ \\
        $56$ & $21$ \\
        $57$ & $22$ \\
        $58$ & $25$ \\
        $59$ & $25$ \\
        $60$ & $18$ \\
    \end{tabular}
\end{center}

This still isn't enough. To reach the summit, we need a bound for every $\delta(n)$. And we can create one! But to do so first requires an introduction to base-$6$ numbers. 

When we write number ``$1729$" we use a sort of short hand for saying 
$$1729 = 1000\cdot\mathbf1 +100\cdot\mathbf7 +10\cdot\mathbf2 +\mathbf9$$
$$ = 10^3\cdot\mathbf1 +10^2\cdot\mathbf7 +10^1\cdot\mathbf2 +\mathbf9$$
But of course, this can be done with any number besides $10$ as our base. When we use $6$ as the base we get
$$1729=6^4\cdot \mathbf1 + 6^3\cdot \mathbf2+6^2\cdot\mathbf0+6^1\cdot\mathbf0+\mathbf1$$
$$=1296\cdot 1+216\cdot 2 + 1$$
Mathematicians sometimes like to write this as $``1729=12001|_6"$.

Now we can go on to

\begin{theorem}
$\delta(n)\le 13\log_6 n + 7$
\end{theorem}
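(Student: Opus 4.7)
The approach is to unfold $n$ in base $6$ and invoke Lemma 6.1 once per base-$6$ digit. Writing $n = 6m + r$ with $m = \lfloor n/6 \rfloor$ and $r \in \{0, 1, \ldots, 5\}$, two applications of Lemma 6.1 give
$$\delta(n) = \delta(6m + r) \le \delta(6) + \delta(m) + \delta(r).$$
The tables from Section 5 yield $\delta(6) \le 6$ and $\delta(r) \le 7$ for every $r \in \{1, 2, 3, 4, 5\}$, while the $r = 0$ case drops the $\delta(r)$ term entirely. Either way one obtains the clean recursion
$$\delta(n) \le \delta(\lfloor n/6 \rfloor) + 13.$$

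From here the plan is routine iteration. Applying the recursion $k$ times yields $\delta(n) \le 13k + \delta(\lfloor n / 6^k \rfloor)$. I would choose $k = \lfloor \log_6 n \rfloor$; then $6^k \le n < 6^{k+1}$ forces $\lfloor n / 6^k \rfloor$ to lie in $\{1, 2, 3, 4, 5\}$ (that is, at the leading base-$6$ digit of $n$), and the Section~5 table again gives $\delta(\lfloor n/6^k \rfloor) \le 7$. Collecting the pieces,
$$\delta(n) \le 13 \lfloor \log_6 n \rfloor + 7 \le 13 \log_6 n + 7,$$
which is the claim.

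The main obstacle here is really bookkeeping rather than insight. One has to confirm that the additive constant $13$ is large enough to absorb $\delta(6) + \delta(r)$ \emph{uniformly} across all remainders $r \in \{0,1,\ldots,5\}$, including the degenerate $r=0$ step where no new digits are spent on the remainder; and one has to verify that the iteration bottoms out on a leading base-$6$ digit in $\{1,\ldots,5\}$ rather than at $0$, so that the table's bound of $7$ legitimately applies. Both points follow immediately from the definitions and the small-$n$ lemmas already packaged at the end of Section~5, so the remaining work is just writing the induction out carefully.
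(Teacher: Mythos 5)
Your proof is correct and takes essentially the same route as the paper: both expand $n$ in base $6$ and apply Lemma 6.1 once per digit to get $\delta(n)\le 13\lfloor\log_6 n\rfloor+7$; you merely phrase the single unfolding $n=a_0+6(a_1+6(\cdots))$ as the recursion $\delta(n)\le\delta(\lfloor n/6\rfloor)+13$. If anything, your treatment of the remainder $r=0$ (dropping the $\delta(r)$ term outright instead of invoking a bound on $\delta(0)$, which the Section 5 lemmas never actually establish) is slightly more careful than the original.
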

\begin{proof}
Suppose $a_k...a_0|_6$ is the base-$6$ representation of $n$. Or in other words, suppose $n=a_0+6(a_1+6(a_2+6(a_3+...))).$ Since this expression is just a bunch of additions and multiplications, we can pull it apart with the previous lemma and get
$$\delta(n)\le k\delta(6)+\delta(a_0)+...+\delta(a_k).$$
We know already that $\delta(6)\le 6$ and since each $a_i$ must be one of $0,1,2,3,4,$ or $5$ we can conclude $\delta(a_i)\le 7$. That gives us
$$\delta(n)\le 6k+(k+1)7=13k+7.$$

And since $n$ has $\lfloor \log_{6}n\rfloor+1=k+1$ digits in its base-$6$ representation, we know $k=\lfloor \log_{6}n\rfloor\le \log_{6}n.$ So finally by substitution
$$\delta(n)\le 13\log_6 n + 7$$
\end{proof}

This is the bound we've been looking for! It is a rope taking us within arm's reach of the summit.

\section{Powers of Five}
\begin{center}
    ``\textit{Wanna see me run to that mountain and back?...}" \\
    -Spongebob (as \textit{The Quickster})
\end{center}

We should start with

\begin{lemma}
The last decimal digit of every power of $5$ is $5$.
\end{lemma}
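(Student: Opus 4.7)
The plan is to prove this by induction on the exponent $n \ge 1$, since the claim is about the last digit of $5^n$ for all positive integers $n$. The base case is immediate: $5^1 = 5$, which trivially has last decimal digit $5$. This anchors the induction at the right place.

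For the inductive step, I would assume $5^n$ has last decimal digit $5$, which is the same as saying $5^n = 10k + 5$ for some non-negative integer $k$. Then I would compute
\[5^{n+1} = 5 \cdot 5^n = 5(10k + 5) = 50k + 25 = 10(5k + 2) + 5,\]
so $5^{n+1}$ has last decimal digit $5$ as well. The content of the argument is just the observation that $5 \cdot 5 = 25$, and the tens place absorbs harmlessly into the ``rest of the number.''

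An equivalent phrasing (which I find marginally cleaner) is to work modulo $10$: since $5 \equiv 5 \pmod{10}$ and $5 \cdot 5 = 25 \equiv 5 \pmod{10}$, a trivial induction gives $5^n \equiv 5 \pmod{10}$ for every $n \ge 1$, and the last decimal digit of an integer $m$ is just $m \bmod 10$. There is no real obstacle here; the only thing to be slightly careful about is starting the induction at $n = 1$ rather than $n = 0$, since $5^0 = 1$ does not end in $5$ and is not relevant to our later use of the lemma (we only care about powers $5^n$ with $n \ge 1$, whose decimal digits we wish to condense).
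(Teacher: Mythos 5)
Your proof is correct. It differs from the paper's in mechanism, though both arguments ultimately exhibit $5^n$ in the form $10k+5$. The paper skips induction entirely and writes down an explicit value for $k$, namely
$$5^n=10\cdot\frac{5^{n-1}-1}{2}+5,$$
justifying that this $k$ is an integer by noting that $5^{n-1}$ is odd, so $5^{n-1}-1$ is even. Your version recovers $k$ recursively ($k \mapsto 5k+2$) rather than in closed form, and your mod-$10$ rephrasing is the cleanest statement of what is going on. The trade-off is minor: the paper's one-line identity avoids setting up an induction but quietly leans on the auxiliary fact that powers of five are odd (itself a tiny induction or an appeal to parity of products), whereas your induction is fully self-contained and makes the base case $n=1$ explicit --- a point the paper glosses over but which matters, as you note, since $5^0=1$ does not end in $5$. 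Either proof serves the paper's later purpose, which is only to guarantee a $5$ among the digits of $5^n$ to use as the base of the exponential.
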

\begin{proof}
There are a few ways to do this. One method is to find a way of writing $5^n$ as $10k+5$ for some integer $k$. The following will do:
$$5^n=10\cdot\frac{5^{n-1}-1}{2}+5.$$
Note that since powers of five are always odd, we are assured that $5^{n-1}-1$ is even and therefore that $\frac{5^{n-1}-1}{2}$ is a whole number. 
\end{proof}

The lemma guarantees that $5^n$ always has a $5$ in its decimal representation. That makes selfcondensing it much easier since we can take that last $5$ and use it as the base in ``$5^n$". Then we are free to condense the remaining digits into $n$. 

\begin{lemma}
$5^n$ is selfcondensable for $n\ge 53$.
\end{lemma}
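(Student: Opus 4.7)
The plan is to leverage Lemma 7.1: since $5^n$ ends in the digit $5$, I reserve that trailing $5$ as the base of an exponentiation and condense the remaining digits into $n$, producing the selfcondensation $5^n = 5^{(\text{condensation of the other digits})}$. By Lemma 4.2, $5^n$ has $\lfloor n\log_{10}5\rfloor + 1$ decimal digits, so after pulling off the trailing $5$ exactly $k := \lfloor n\log_{10}5\rfloor$ digits remain. Since $\delta(n)$ tells us from how many \emph{arbitrary} $k$ digits $n$ is guaranteed condensable, the construction succeeds provided $\delta(n) \le k$.

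Next I would invoke Theorem 6.1 to replace $\delta(n)$ with $13\log_6 n + 7$, and use $\lfloor x\rfloor > x - 1$ on the right-hand side to reduce the required inequality $\delta(n) \le \lfloor n\log_{10}5\rfloor$ to the purely analytic statement
\[
n\log_{10}5 \;\ge\; 13\log_6 n + 8 \qquad (n\ge 53).
\]
This single inequality is where the rest of the lemma lives.

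Finally, I would verify the inequality by the usual ``boundary value plus monotone derivative'' routine. At $n = 53$, a short numerical check (using $\log_{10}5\approx 0.6990$ and $\log_6 53 \approx 2.216$) gives roughly $37.04$ on the left and $36.81$ on the right, a gap of about $0.23$. Differentiating the difference yields $\log_{10}5 - 13/(n\ln 6)$, which is positive whenever $n > 13/(\ln 6 \cdot \log_{10}5) \approx 10.4$. So the gap is strictly increasing from $n \ge 11$ onward, and in particular remains positive for every $n\ge 53$.

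The whole difficulty sits in that last step: the bounds coming from Theorem 6.1 and from the $\log_{10}5$ digit count are uncomfortably close in the fifties, and the cutoff $n\ge 53$ is precisely where the gap first opens wide enough to absorb the unit of slack lost to the floor. Everything else --- Lemma 7.1 to locate a usable $5$, Lemma 4.2 to count digits, Lemma 4.1 implicitly via the definition of $\delta$, and Theorem 6.1 to bound $\delta(n)$ --- is a plug-and-play assembly of the machinery already developed.
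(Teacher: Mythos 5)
Your proposal is correct and follows essentially the same route as the paper: reserve the trailing $5$ as the base, count the remaining $\lfloor n\log_{10}5\rfloor$ digits, bound $\delta(n)$ by $13\log_6 n + 7$, and reduce to the inequality $n\log_{10}5 \ge 13\log_6 n + 8$, verified at $n=53$. Your derivative argument for monotonicity is in fact slightly more careful than the paper's appeal to ``linear outruns logarithmic,'' and the only cosmetic slip is citing the bound as Theorem 6.1 rather than 6.2.
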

\begin{proof}
The number $5^n$ yields $\lfloor\log_{10}5^n\rfloor+1$ decimal digits (which we know by Lemma 4.2). One such digit will be a $5$. Therefore $5^n$ is selfcondensable if the remaining $\lfloor\log_{10}5^n\rfloor$ digits can be condensed into $n$. But by Theorem 6.2, we know that $n$ can always be condensed from at most $13\log_6n + 7$ digits. So $5^n$ is selfcondensable if 
$$13\log_6n + 7\le \lfloor\log_{10}5^n\rfloor.$$
The floor function (= rounding down) makes the inequality a bit troublesome so instead we choose to work with
$$n\log_{10}5 - 1\le \lfloor n\log_{10}5\rfloor=\lfloor\log_{10}5^n\rfloor.$$
So when is
$$13\log_6n + 7\le n\log_{10}5 - 1?$$
The inequality is not true for small integers. But it must flip-flop eventually since the left-hand-side grows logarithmically and the right-hand-side grows linearly. Some button-punching reveals the flip-flop at $n=53$ since
$$13\log_6(53) + 7=35.806...\quad\text{and}$$
$$53\log_{10}5 - 1=36.054...\ .$$
\end{proof}

This lemma is pretty good. But with a quick trick we can do better than $n\ge 53$.
\begin{lemma}
$5^n$ is selfcondensable for $n\ge 24$.
\end{lemma}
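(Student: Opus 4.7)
The plan is to reuse the extraction idea from Lemma 7.2 — peel off a single $5$ from the decimal digits of $5^n$ (guaranteed by Lemma 7.1), use it as the base of the power, and condense the remaining $\lfloor n\log_{10}5\rfloor$ digits into the exponent $n$ — but to replace the generic logarithmic bound of Theorem 6.2 (which only becomes effective from $n=53$ onward) by the sharper per-value bounds for $\delta(n)$ already tabulated in Section 6.

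Since Lemma 7.2 already disposes of every $n\ge 53$, the only remaining task is a finite sweep over $24\le n\le 52$. For each such $n$ I would check the single inequality
\[
\delta(n)\le \lfloor n\log_{10}5\rfloor,
\]
reading $\delta(n)$ off the tables of Section 6; by Lemma 4.1 it then follows that whatever $\lfloor n\log_{10}5\rfloor$ remaining digits actually appear in $5^n$ after one $5$ has been set aside as the base, they can be condensed into the exponent $n$, and the whole number $5^n$ is therefore selfcondensable.

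The only real work — more tedious than difficult — will be this finite case check itself: there is no elegant shortcut, since several entries are borderline (for instance $n=26,27,29$ achieve $\delta(n)\le 18,18,20$ while $\lfloor n\log_{10}5\rfloor = 18,18,20$ exactly). I would present the verification by augmenting the Section 6 table with a column for $\lfloor n\log_{10}5\rfloor$ and letting the reader inspect. It is worth observing that $n=24$ really is the best threshold this table-based strategy can reach: at $n=23$ the table offers only $\delta(23)\le 19$, while $5^{23}$ has a mere $17$ decimal digits — one short even before reserving one for the base — so pushing the threshold any lower would require either a tighter individual bound on $\delta(23)$ or a more imaginative extraction (say, exploiting multiple $5$'s or factorials such as $4!=24$ appearing among the digits of $5^n$), which is not the direction I would pursue here.
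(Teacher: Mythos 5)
Your proposal is correct and is exactly the paper's argument: the paper's proof of this lemma is the one-line instruction to compare $\lfloor\log_{10}5^n\rfloor$ against the $\delta(n)$ table of Section 6, which is precisely the finite sweep you describe (and your identification of the borderline cases $n=26,27,29$ is accurate). One trivial quibble in your closing aside: since $\delta(23)\le 19$ and $5^{23}$ has $17$ digits, the shortfall at $n=23$ is two digits before reserving the base, not one, though your conclusion that the threshold cannot be lowered by this method stands.
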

\begin{proof}
Just compare $\lfloor \log 5^n \rfloor$ to the $\delta(n)$ table we made in the previous section.
\end{proof}

We are ready now. We have approached the summit with toolery so far but the final ascent must be done free-climbing.

\begin{theorem}
$5^n$ is selfcondensable for all $n\ge 1$.
\end{theorem}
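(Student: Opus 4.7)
The plan is simply to splice together Lemma 7.3 with a direct case-check on the finitely many values it misses. Since Lemma 7.3 establishes selfcondensability for every $n \ge 24$, only the cases $n = 1, 2, \ldots, 23$ remain, and the introduction already handled $n \le 5$ explicitly (namely $5^1 = 5$, $25 = 5^2$, $125 = 5^{1+2}$, $625 = 5^{6-2}$, and $3125 = 5^{3 \cdot 1 + 2}$). So the real content of the proof is to exhibit, for each $n$ with $6 \le n \le 23$, a single explicit arithmetic expression that condenses the digits of $5^n$ back into $5^n$.

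A general heuristic guides the search. By Lemma 7.1, every $5^n$ carries at least one digit equal to $5$ in its decimal expansion, so we may reserve one of those $5$'s to serve as the base of an exponential $5^{(\cdots)}$ and ask only that the remaining $\lfloor \log_{10} 5^n \rfloor$ digits be condensable into the exponent $n$. The number of such leftover digits grows like $n \log_{10} 5 \approx 0.7\,n$, whereas the $\delta(n)$ table compiled in Section 6 shows that the budget needed to condense $n$ is modest; this is exactly the comparison driving Lemma 7.3, and for most $n$ in the range $6 \le n \le 23$ the same comparison already succeeds, so an expression of the form $5^{(\text{condensed leftovers})}$ will usually work on the first try.

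The main obstacle, and what makes the final theorem require its own proof at all, is that for small $n$ the leftover digits are both few in number and forced upon us rather than freely chosen; the uniform bound from Section 6 leaves no slack, so each individual $n$ must be inspected by hand and a bespoke condensing produced. For instance, $5^6 = 15625$ leaves us with the multiset $\{1,2,5,6\}$ after reserving a $5$, and one verifies $5^6 = 5^{\,6 \cdot 1^{2+5}}$; similarly $5^7 = 78125$, $5^8 = 390625$, $5^9 = 1953125$ and so on each demand a dedicated expression. Factorial is the key piece of leverage for digits that are otherwise hard to absorb, since $0! = 1! = 1$ and $3! = 6$ let awkward digits be neutralised or converted. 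I would march through $n = 6, 7, \ldots, 23$ in order, recording one valid condensing per value, and then invoke Lemma 7.3 to cover everything from $n = 24$ upward, completing the theorem.
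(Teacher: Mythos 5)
Your proposal is correct and follows essentially the same route as the paper: invoke Lemma 7.3 to dispose of all $n\ge 24$, then exhibit an explicit condensing for each remaining $n\le 23$ by hand (the paper does this in a table, e.g.\ $5^6=5^{6\cdot 1^{5+2}}$, matching your sample). The only remaining work in your plan is the bookkeeping of actually writing down the eighteen expressions, which is exactly what the paper's proof consists of.
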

\begin{proof}

The previous two lemmas cover everything except $n=1,2,...,23$. We condense these last values with our bare hands:
\begin{center}
    \begin{tabular}{c|c|c}
        $n$ & $5^n$ & condensing of $n$ \\ \hline
        $2$ & $25$ & $\text{just}\ 2$ \\
        $3$ & $125$ & $1+2$ \\
        $4$ & $625$ & $6-2$ \\
        $5$ & $3125$ & $3\cdot1 + 2$ \\
        $6$ & $15625$ & $6\cdot1^{5+2}$ \\
        $7$ & $78125$ & $7\cdot 1^{8+2}$ \\
        $8$ & $390625$ & $6+2+0\cdot(3+9)$ \\
        $9$ & $1953...$ & $9\cdot1^{5+3+...}$ \\
        $10$ & $9765625$ & $9+7-6\cdot(6-5)^2$ \\
        $11$ & $48828...$ & $ 4+8-\big(\frac{8}{8}\big)^{2+...}$ \\
        $12$ & $2441...$ & $4\cdot(2+1^{4+...})$ \\        
        $13$ & $12207...$ & $7\cdot2 - 1^{2+0+...}$ \\
        $14$ & $61035...$ & $(6+1)(3-0!^{5+...}) $ \\
        $15$ & $3051...$ & $3\cdot5\cdot0!^{1+...}$ \\
        $16$ & $1525...$ & $2^{5-1^{5+...}}$ \\
        $17$ & $76293...$ & $2\cdot9 -(7-6)^{3+...}$ \\
        $18$ & $38146...$ & $3\cdot8 - 6\cdot 1^{4+...}$ \\
        $19$ & $190734...$ & $9+7+3\cdot1^{0+4+...}$ \\
        $20$ & $953674...$ & $ 9+5+6\cdot(4-3)^{7+...}$ \\
        $21$ & $476837...$ & $7\cdot3\cdot(7-6)^{4+8+...} $ \\
        $22$ & $23841...$ & $2\cdot(3+8)\cdot1^{4+...}$ \\
        $23$ & $1192092...$ & $(1+1)9+2+2+0!^{9+...}$ \\
    \end{tabular}
\end{center}

Lastly, the case $n=1$ is so simple it is hardly worth pointing out. To say that $5$ is selfcondensable is really to say that $5$ is $5$.
\end{proof}

\section{Fewer Rules}
\begin{center}
    ``\textit{... Wanna see me do it again?}" \\
    -Spongebob (as \textit{The Quickster})\\\ \\
    ``\textit{Hey Goku -- why don't you try using some heavier weights?}" \\
    -King Kai, \textit{Dragon Ball Z}
\end{center}

Once a difficult goal has been accomplished it is natural -- and often very fun -- to attempt the same goal with more difficulty by one's own choice. This is done by adding restrictions such as, for example, in competitive Rubiks cube solving. After mastering puzzle solving with hands some solvers restrict themselves to use only feet\footnote{Which has been done in under 17 seconds: \texttt{https://www.youtube.com/watch?v=IZjToD9B4Vw}}. Similarly, many humans have managed to be shot out of cannons. But this is not difficult enough. Some humans have chosen to be shot out of cannons on fire\footnote{See \textit{America's Got Talent}: \texttt{https://www.youtube.com/watch?v=G4kyaGML2pM}}. And again, the first recorded summiting of Mt. Everest was 1953 using supplemental oxygen. Then in 1978, an expedition summited restricting themselves to only natural oxygen.

But do not think these extremeties are natural only to a weird minority. Children know better. No sooner has a child cannonballed into the pool then they are jumping in again -- this time spinning, next with eyes closed, next from the taller diving board, and so on. The author is not sure what is the best articulation of such activity. One sometimes hears it called ``pushing the limits" but that is not quite right. Very often, we are searching out and putting on limits like shoppers trying new outfits in the dressing room. Rather, we are ``pushing" into chaos using limitation as a hatchet cutting out a path.

Every art -- music, sports, mathematics, philosophy, animation, etc. -- is therefore of a battle against chaos. Every discover and record is a push one inch further into the unknown. And we all know this already. It is contact with the boundary of chaos that causes us to rise to our feet cheering at say football games, rodeos, Olympic gymnastics, or competitive Smash Bros. The deepest appetite of the human soul is to conquer chaos and each of us \textit{does} conquer -- until we grow apathetic and forget how to be childish.

But one should check motive. There is always the temptation to push boundaries to show off or simply to irritate others. We must do our work for the high and hard duty of -- our God help us -- having fun.

We reached our summit in the previous section but did so with supplemental oxygen. We used 6 rules as we climbed and we now challenge ourselves to do without all six. And so we step into chaos. Some rules can be scrapped easily. For example, the most important use we made of division was turning equal digits into ones (ex. $\frac{7}{7}=1$). But there is a workaround with subtraction and factorial since $(7-7)!=0!=1$. So having thrown division overboard, some $\delta(n)$ values might become bigger -- but not terribly. At worst, we would need only do more cases by hand.

But can we scrap factorial? Yes. The really important use of factorial was turning zeros into ones. But we can workaround this with exponents since mathematicians are generally agreed $0^0=1$. Once again, this will increase $\delta(n)$ values a bit. Most $\delta(n)$ values will double but the worst ones will perhaps only triple or quadruple. This is fine since Our proof relied on a linear function outrunning a logarithm. And quadrupling $\delta(n)$ will only slow the line down -- and even the slowest line can outrun a logarithm. Scrapping rules is easy so far.

What about exponents? Can we scrap them? No -- nothing is selfcondensable (well, except single digit numbers) if we do so. Our whole proof depended on using the exponent-nature of $5^n$. In general, whole numbers become too big and multiplication cannot catch up fast enough. For example, selfcondensing $1381$ is impossible because the biggest target value we can manage is a measly $(1+1)\cdot3\cdot8=48.$

There are two interesting sets of rules we should talk about though.

1) Imagine we can use the digits as many times as we want but can only use addition, subtraction, and multiplication a limited number of times. Here we have stumbled into the foothills of one of the harshest mountains in the mathematical landscape\footnote{Thanks to Sam Spiro pointing this out.}. Suppose we are handed an arbitrary whole number and asked how many operations are needed to condense it from its digits. It turns out a formula or quick method to do this might give us a solution to the ``P=NP" problem\footnote{Technically, we are supposed to start with only the number 1 and use operations from there (as opposed to starting with the number's own digits). Also we only need a good formula for $n!$ and not arbitrary whole numbers. Mostly we should note that we have entered a territory which only the most skilled and resilient of our mathematical-alpiners have started to navigate.} -- that notoriously difficult peak which seems to have no climbable faces. We are not climbing that mountain (not today at least).

2) We keep factorial and exponents in general but cannot use them to turn zeros into ones. In other words, the equations ``$0!=1$" and ``$0^0=1$" are not allowed. Selfcondensability in general will be difficult since big numbers with lots of zeros like $120003000000000000000004$ are almost certainly not selfcondensable. We therefore must show that there are not too many zeros in any power of $5$. The first zero, for example, occurs at $5^8=390625$ and the power with the largest percentage of zeros seems to be
$$5^{45}=284217\mathbf0943\mathbf0404\mathbf0\mathbf0743484497\mathbf07\mathbf03125$$
in which about $ 22$\% of the digits are zero (every power up to $5^{1000}$ was checked). We should give this case its own section.

\section{A Dead End}

We want to explore a particular restriction of our original six rules ans see if the powers of five can still be proven selfcondensable. We have scrapped some of our gear and want to know if the mountain can still be climbed. The restriction is zeros can no longer be made ones (either by factorial, $0!=1$, or by exponents, $0^0=1$). Zeros must remain boring old zeros. The proof comes down to whether or not powers of five have too many zeros.

The goal seems obvious. If all digits are equally likely, shouldn't roughly $10$\% be zero? And shouldn't the other $90$\% of non-zero digits be enough to do our work? This appears the case for big powers at least. For example, $5^{1430}$ has $1000$ digits with the following count of each:
\begin{center}
\begin{tabular}{ c | c c c c c c c c c c}
 digit & 0 & 1 & 2 & 3 & 4 & 5 & 6 & 7 & 8 & 9\\ \hline
 \# in $5^{1430}$ & 98 & 97 & 89 & 92 & 94 & 104 & 114 & 91 & 115 & 106
\end{tabular}
\end{center}

The percentage of zeros in $5^n$ likely gets closer to $10$\% as $n$ gets bigger. But \textit{likely} is not synonomous with \textit{proven} for mathematicians anymore than looking at a picture of a summit is the same as standing on it for a mountaineer. We must get our hands dirty before deciding the matter.

The author however failed to find a proof. The peak is very difficult with our restricted gear. We are forced to analyze the decimal representations of integer powers -- something on which very little is known. The task is like attempting surgery on a creature no one knows the anatomy of. We are forced on a detour through an uncharted mountain range. One of the great mathematical alpiners, Paul Erd\H{o}s, made some guesses about the digits of powers of $2$. We still don't know if they are correct though many mathematicians have tried hard to find out -- some with very sophisticated climbing gear\footnote{See \textit{Ternary expansions of Powers of 2} at \texttt{https://arxiv.org/abs/math/0512006}}. But even if a summit cannot yet be reached, mathematicians still have fun guessing what the wildlife will look like once up top.

We will hike a route which appeared promising but dead ends. There is, at least, good scenery on the way. Hopefully a better climber will complete the route or discover a new one.

To find our trailhead, we play a game. How many zeros in a row can one find in powers of five? -- any success? The first zero is in $5^8=39\mathbf0625$, the first pair in 
$$5^{39}=181898940354585647583\mathbf{00}78125,$$
the first $3$-in-a-row in
$$5^{67}=677626357803440271254658\mathbf{000}54371356964111328125$$
and the first $4$-in-a-row in $5^{228}$. Can we find as many zeros in a row as we want? -- or is there a limit? If a limit, we can reach the summit. Imagine, for example, no more than $4$ zeros in a row ever appear. Then roughly $20$\% non-zero digits is guaranteed since in any $5$ consecutive digits, at most $4$ are zero. $20$\% is, of course, much lower than our original $90$\% estimate. But it is enough for a proof. Really any threshold besides $0$\% is enough.

But, as it turns out, no limit exists. One can find any amount of zeros in a row if only one looks far enough. And this we can prove. It will be messy though -- but the best things usually are (such as chili or the birth of puppies). If we must get our hands dirty we may as well jump in the mud.

\section{Perhaps Jargon}
\begin{center}
    ``\textit{Jargon, not argument, is your best ally}" \\
    -C.S. Lewis, \textit{Screwtape Letters}
\end{center}

Most embarrassingly, the author could not present this section without highly technical language -- which is another way to say the author does not understand very well what they are saying. The reader's forgiveness is therefore requested if this section turns out only a demonstration of jargon.

We must make up words again. ``$S_k$" will mean ``the $k$th digits, counting from the right, in powers of five". So for example, $S_3=(00003580358...)$ since

\begin{center}
    \begin{tabular}{cr}
        $5^1=$ & $\mathbf0005$ \\
        $5^2=$ & $\mathbf0025$ \\
        $5^3=$ & $\mathbf0125$ \\
        $5^4=$ & $\mathbf0625$ \\
        $5^5=$ & $\mathbf3125$ \\
        $5^6=$ & $1\mathbf5625$ \\
        $5^7=$ & $7\mathbf8125$ \\
        $5^8=$ & $39\mathbf0625$ \\
        $5^9=$ & $195\mathbf3125$ \\
        $5^{10}=$ & $976\mathbf5625$ \\
        $5^{11}=$ & $4882\mathbf8125$ \\
        $5^{12}=$ & $...$ \\
    \end{tabular}
\end{center}
Similarly,
$$S_0=(555...),\quad S_1=(0222...),\quad S_2=(001616...),\quad\text{and}\quad S_4=(00000179562...).$$
Some (or all?) of the $S_k$ enter cycles. When this happens we use ``$C_k$" to mean ``just the cycle of $S_k$". So 
$$C_0=(5),\quad C_1=(2),\quad C_2=(16),\quad \text{and}\quad C_3=(0358).$$

Let's also use ``$d_n(k)$" to mean ``the $k$th digit of $5^n$ counting from the right". So we can rephrase the definition of $S_k$ in Mathlish as ``$S_k=\{d_n(k)\}_{n=0}^\infty$". In fact, the definition of $d_n(k)$ can be rephrased as ``$5^n=\sum_{k=0}^\infty d_n(k)10^k$" -- well, partially rephrased. To be complete we need also to communicate $d_n(k)$ is a single decimal digit by saying ``$0\le d_n(k)\le 9$ for all $n, k$".

Lastly, we need a word to communicate whether a whole number is odd or even (called the number's \textit{parity} usually). We use ``$P$" and say $P(x)=0$ when $x$ is even and $P(x)=1$ when $x$ is odd.

With all that we are ready for the trail. We have really only been tying our shoelaces up until now. The first switchback is

\begin{lemma}
    The $d_n(k)$ all obey
    $$d_{n+1}(k)=5P(d_n(k))+\Big\lfloor\frac{d_n(k-1)}{2}\Big\rfloor.$$
\end{lemma}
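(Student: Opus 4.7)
The plan is to compute $5^{n+1}$ directly from the digit expansion of $5^n$ and read off the digits of the product, exploiting the very special fact that multiplication by $5$ never produces a carry when applied digit-by-digit.

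First I would write
\begin{equation*}
5^{n+1} \;=\; 5\cdot 5^n \;=\; 5\sum_{k=0}^\infty d_n(k)\,10^k \;=\; \sum_{k=0}^\infty 5\,d_n(k)\,10^k.
\end{equation*}
Since each $d_n(k)$ is a single decimal digit, $5\,d_n(k)\in\{0,5,10,15,20,25,30,35,40,45\}$. The key arithmetic observation is that
\begin{equation*}
5\,d_n(k) \;=\; 10\Big\lfloor\tfrac{d_n(k)}{2}\Big\rfloor \;+\; 5\,P(d_n(k)),
\end{equation*}
because if $d_n(k)$ is even then $5\,d_n(k)$ is a multiple of $10$ with quotient $d_n(k)/2$, and if $d_n(k)$ is odd then $5\,d_n(k)$ ends in $5$ with tens part $(d_n(k)-1)/2 = \lfloor d_n(k)/2\rfloor$. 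Either way the units digit is exactly $5\,P(d_n(k))$ and the tens digit is exactly $\lfloor d_n(k)/2\rfloor$.

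Next I would substitute this decomposition back and reindex the ``tens'' contribution so that its power of $10$ matches the summation variable:
\begin{equation*}
5^{n+1} \;=\; \sum_{k=0}^\infty 5\,P(d_n(k))\,10^k \;+\; \sum_{k=0}^\infty \Big\lfloor\tfrac{d_n(k)}{2}\Big\rfloor 10^{k+1} \;=\; \sum_{k=0}^\infty\!\left(5\,P(d_n(k)) + \Big\lfloor\tfrac{d_n(k-1)}{2}\Big\rfloor\right)\!10^k,
\end{equation*}
with the convention $d_n(-1)=0$.

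The step I would flag as the only thing that could go wrong is the no-carry check, which is what makes this sum genuinely equal to the standard decimal expansion of $5^{n+1}$. Since $5\,P(d_n(k))\in\{0,5\}$ and $\lfloor d_n(k-1)/2\rfloor\le \lfloor 9/2\rfloor = 4$, the coefficient of $10^k$ is at most $9$ and at least $0$, so it is already a legal decimal digit. Uniqueness of the decimal representation then forces
\begin{equation*}
d_{n+1}(k) \;=\; 5\,P(d_n(k)) + \Big\lfloor\tfrac{d_n(k-1)}{2}\Big\rfloor,
\end{equation*}
which is precisely the recurrence claimed. The whole argument really just repackages long multiplication by $5$, with the happy accident that the partial products $5\cdot d$ contribute non-overlapping pieces to adjacent digit positions.
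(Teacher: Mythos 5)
Your proof is correct and follows essentially the same route as the paper's: both decompose $5\,d_n(k)$ into a units part $5P(d_n(k))$ and a tens part $\big\lfloor d_n(k)/2\big\rfloor$, substitute into $5^{n+1}=\sum_k 5\,d_n(k)\,10^k$, and reindex. Your explicit verification that each coefficient $5P(d_n(k))+\big\lfloor d_n(k-1)/2\big\rfloor$ lies in $\{0,\dots,9\}$ --- so that uniqueness of decimal representation lets you identify it with $d_{n+1}(k)$ --- is a step the paper leaves implicit, and it is the right thing to flag.
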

\begin{proof}
    This becomes obvious after an hour of pencil-and-paper multiplication, but we should prove it anyways. It is sad no better presentation could be quickly found besides this ugly equation.
    
    We can rewrite a whole number in a way that breaks off its last digit as one breaks a square off a chocolate bar. For example, $945=5+94\cdot10$ or $1729=9+172\cdot10$. In general, given an integer $x$, we can rewrite it
    $$x=(x\ \%\ 10)+\Big\lfloor\frac{x}{10}\Big\rfloor\cdot 10.$$
    
    This observation is all we need. The rest can be said in algebra:
    $$\sum_{k=0}^\infty d_{n+1}(k)10^k$$
    $$=5^{n+1}=5\cdot 5^n=\sum_{k=0}^\infty 5d_{n}(k)10^k$$
    $$=\sum_{k=0}^\infty \Big((5d_{n}(k)\ \%\ 10)+\Big\lfloor\frac{5d_{n}(k)}{10}\Big\rfloor\cdot 10\Big)10^k$$
    $$=\sum_{k=0}^\infty 5P(d_n(k))10^k+\Big\lfloor\frac{d_{n}(k)}{2}\Big\rfloor\cdot 10^{k+1}$$
    $$=\sum_{k=0}^\infty \Big(5P(d_n(k))+\Big\lfloor\frac{d_{n}(k-1)}{2}\Big\rfloor\cdot \Big)10^{k}$$
    
    If the substitution of ``$5P(d_n(k))$" for ``$5d_n(k)\ \%\ 10$" is confusing, note any integer $x$ can be written $x=2m+P(x)$ for some integer $m$. Thus
    $$5x\ \%\ 10=5(2m+P(x))\ \%\ 10=(10m+5P(x))\ \%\ 10=5P(x).$$
\end{proof}

This lemma is like a biologist's scalpel. We are ready to dissect $S_k$.

\begin{theorem}
    For $k\ge2$, every $S_k$ enters a cycle \\
    \indent (a) by $n=k+1$\\
    \indent (b) of length $l=2^{k-1}$ \\
    \indent (c) in which $d_{n+l/2}(k)=d_n(k)\pm 5$ and\\
    \indent (d) if the occurence count of each digit is recorded in $\vec{v_k}\in\mathbb{Z}^{10}$ (ex. the $4$th component of $\vec{v_k}$ is the total amount of $4$'s in the cycle of $S_k$) then $\vec{v_k}=A^{k-1}\vec{u}$ where
    $$
    A=
    \begin{bmatrix}
    1 & 1 & 0 & 0 & 0 & 0 & 0 & 0 & 0 & 0 \\
    0 & 0 & 1 & 1 & 0 & 0 & 0 & 0 & 0 & 0 \\
    0 & 0 & 0 & 0 & 1 & 1 & 0 & 0 & 0 & 0 \\
    0 & 0 & 0 & 0 & 0 & 0 & 1 & 1 & 0 & 0 \\
    0 & 0 & 0 & 0 & 0 & 0 & 0 & 0 & 1 & 1 \\
    1 & 1 & 0 & 0 & 0 & 0 & 0 & 0 & 0 & 0 \\
    0 & 0 & 1 & 1 & 0 & 0 & 0 & 0 & 0 & 0 \\
    0 & 0 & 0 & 0 & 1 & 1 & 0 & 0 & 0 & 0 \\
    0 & 0 & 0 & 0 & 0 & 0 & 1 & 1 & 0 & 0 \\
    0 & 0 & 0 & 0 & 0 & 0 & 0 & 0 & 1 & 1 \\
    \end{bmatrix}
    \quad\text{and}\quad
    \vec{u}=
    \begin{bmatrix}
    0 \\ 0 \\ 1 \\ 0 \\ 0 \\ 0 \\ 0 \\ 0 \\ 0 \\ 0
    \end{bmatrix}
    $$.
\end{theorem}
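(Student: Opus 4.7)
The plan is to establish all four parts together by induction on $k$. The base case $k=2$ is a direct check against the table at the top of Section~9: $S_2=(0,0,1,6,1,6,\ldots)$, so the cycle $(1,6)$ has been entered by $n=k+1=3$, has length $l=2=2^{k-1}$, satisfies $d_{n+1}(2)=d_n(2)\pm 5$ (since $6=1+5$), and its count vector $A\vec{u}$ has $1$'s in rows $1$ and $6$, matching the single $1$ and single $6$ in $C_2$.

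For the inductive step, I would handle (a), (b), and (c) via a short number-theoretic reduction. The digit $d_n(k)$ is read off from $5^n\bmod 10^{k+1}$, and for $n\ge k+1$ we have $5^{k+1}\mid 5^n$, so by the Chinese remainder theorem everything is controlled by $5^n\bmod 2^{k+1}$. The classical fact that $5$ has multiplicative order exactly $2^{k-1}$ modulo $2^{k+1}$ (for $k\ge 2$) gives (a) and an upper bound of $2^{k-1}$ for the period in (b). Part (c) -- the main obstacle -- follows from the sharper lifting identity
\[5^{2^{k-2}}\equiv 1+2^k\pmod{2^{k+1}},\]
which I would prove by a separate induction on $k$ (squaring yields $(1+2^k)^2=1+2^{k+1}+2^{2k}\equiv 1+2^{k+1}\pmod{2^{k+2}}$, advancing the exponent). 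Given the identity, $5^{n+l/2}-5^n=5^n(5^{2^{k-2}}-1)\equiv 2^k\pmod{2^{k+1}}$ (since $5^n$ is odd) and $\equiv 0\pmod{5^{k+1}}$, so by CRT $5^{n+l/2}-5^n\equiv 5\cdot 10^k\pmod{10^{k+1}}$. This is exactly the statement that the $k$-th digit shifts by $\pm 5$ after $l/2$ steps, giving (c). It also pins (b) down to exactly $2^{k-1}$: any smaller period would divide $l/2$, forcing this shift to be $0$.

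For part (d), I would use Lemma 9.1 together with (c). Fix $n$ in one period of $S_{k-1}$ (length $l/2=2^{k-2}$ by induction) and set $j=d_n(k-1)=d_{n+l/2}(k-1)$. By (c) the parities $P(d_n(k))$ and $P(d_{n+l/2}(k))$ are opposite, so Lemma 9.1 evaluated at $n$ and at $n+l/2$ yields
\[d_{n+1}(k)=5P(d_n(k))+\lfloor j/2\rfloor\quad\text{and}\quad d_{n+l/2+1}(k)=5\bigl(1-P(d_n(k))\bigr)+\lfloor j/2\rfloor,\]
which are precisely $\lfloor j/2\rfloor$ and $\lfloor j/2\rfloor+5$. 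Thus each occurrence of digit $j$ in $C_{k-1}$ contributes exactly one copy of each of these two digits to $C_k$. That is precisely the column structure of $A$ (column $j$ has $1$'s in rows $\lfloor j/2\rfloor$ and $\lfloor j/2\rfloor+5$), so $\vec{v_k}=A\vec{v_{k-1}}$; iterating from $\vec{v_1}=\vec{u}$ gives $\vec{v_k}=A^{k-1}\vec{u}$. The single genuinely delicate ingredient in the whole argument is the lifting identity behind (c); once it is in hand, (a), (b), and (d) are bookkeeping on top of Lemma 9.1.
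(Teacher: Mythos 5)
Your argument is correct, and for parts (a)--(c) it takes a genuinely different route from the paper's. The paper stays entirely inside the digit recurrence of Lemma 10.1 (which you cite as Lemma 9.1): to show that $C_k$ is twice as long as $C_{k-1}$ rather than equal to it, it must show that one period of $C_{k-1}$ flips the parity of $d_n(k)$ an odd number of times, i.e.\ that $C_{k-1}$ contains an odd count of $2$'s, $3$'s, $6$'s and $7$'s; this is extracted from part (d) at level $k-1$ by computing the orbit of $\vec{u}$ under $A$ over $\mathbb{F}_2$ and observing it is eventually periodic with period $4$. The paper's induction is therefore genuinely entangled: (d) at level $k-1$ feeds (a) and (b) at level $k$, which in turn feed (c) and (d) at level $k$. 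Your approach decouples this. The order of $5$ in $(\mathbb{Z}/2^{k+1}\mathbb{Z})^\times$ together with the lifting identity $5^{2^{k-2}}\equiv 1+2^k\pmod{2^{k+1}}$ and the Chinese remainder theorem gives $5^{n+l/2}-5^n\equiv 5\cdot 10^k\pmod{10^{k+1}}$, which delivers (a), (b) and (c) in one stroke with no reference to digit counts; your derivation of (d) from (c) and the recurrence is then essentially identical to the paper's final step (each entry of $C_{k-1}$ contributes to $C_k$ exactly once with each parity, which is precisely the column structure of $A$). What you lose is the self-contained digit-recurrence flavor and the incidental mod-$2$ information about the vectors $\vec{v_k}$; what you gain is a shorter proof whose only nontrivial input is a standard $2$-adic fact, an exact rather than case-split determination of the period, and a cleaner logical structure in which (a)--(c) do not lean on (d). Both arguments are sound; yours is the easier one to verify.
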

\begin{proof}
    We induct $k$ starting with $k=2$. 
    
    Note $d_n(0)=5$ and $d_n(1)=2$ for $n\ge 2$ (this can be proven rigorously inducting $n$ with Lemma 10.1). Next inspect $d_n(2)$ and observe $d_3(2)=1$ since $5^3=125$. Thus $d_n(2)$ therefore oscillates between $1$ and $6$ since
    $$6=5P(1)+\Big\lfloor\frac{d_n(1)}{2}\Big\rfloor=5+1$$
    $$\text{and}\quad1=5P(6)+\Big\lfloor\frac{d_n(1)}{2}\Big\rfloor=0+1.$$
    Thus we have a cycle (a) starting at $n=k+1=3$ (b) of length $l=2^{k-1}=2$ (c) obeying $d_{n+1}(2)=d_n(2)\pm 5$ and (d) satisfying 
    $$\vec{v_2}=A\vec{u}=
    \begin{bmatrix}
    0 & 1 & 0 & 0 & 0 & 0 & 1 & 0 & 0 & 0
    \end{bmatrix}^\top.$$
    
    The inductive case is trickier. We must prove the theorem for $k$ assuming it for $k-1$.
    
    We should first simply convince ourselves $S_k$ cycles. by Lemma 10.1, $S_k$ cycles if any pair $(d_n(k), d_n(k-1))$ occurs at two values of $n$. But the pair can only be one of $10^2=100$ possible values and therefore recurs. And since $d_n(k)$ affects $d_{n+1}(k)$ only by its parity, the length of $C_k$ must be either equal to or double the length of $C_{k-1}$.
    
    Next we inspect where $C_k$ begins. Since it is assumed $C_{k-1}$ starts by $n=k$ with length $2^{k-2}$ we are assured
    $$d_{k-1}(k-1)=d_{k-1+2^{k-2}}(k-1).$$
    And note $d_k(k)=0$ since $10^k>5^k$ for $k\ge 1$. Thus if $d_{k+2^{k-2}}(k)$ is even, $C_k$ begins by $n=k+1$ and is equal in length to $C_{k-1}$ (so $l=2^{k-2}$). If instead $d_{k+2^{k-2}}(k)$ is odd, we claim $d_{k+2^{k-1}}(k)$ is even and $C_k$ therefore still starts by $n=k+1$ but is double the length of $C_{k-1}$ (so $l=2^{k-1}$).
    
    If follows from Lemma 10.1
    $$P(d_{n+1}(k)-d_n(k))=P\Big(\Big\lfloor\frac{d_n{k-1}}{2}\Big\rfloor\Big).$$
    Thus $d_{n+1}(k)$ differs in parity from $d_n(k)$ exactly when $d_n(k-1)$ is one of $2,3,6,$ or $7$. $d_{k+2^{k-2}}(k)$ is therefore odd only if $C_{k-1}$ contains an odd count of $2$'s, $3$'s, $6$'s, and $7$'s. In which case, those same digits ensure $d_{k+2^{k-1}}(k)$ is even.
    
    We must therefore prove $C_{k-1}$ contains an odd count of $2$'s, $3$'s, $6$'s, and $7$'s for all $k\ge 2$. At this point, part (d) of our assumption is needed. We analyze the orbit of $\vec{u}$ under $A$ in $\mathbb{F}_2^{10}$ and total up the $2$nd $3$rd, $6$th, and $7$th components:
    \begin{center}
    \begin{tabular}{ c | c c c c c c c}
    comp. & $\vec{u}$ & $\vec{v_2}$ & $\vec{v_3}$ & $\vec{v_4}$ & $\vec{v_5}$ & $\vec{v_6}$ & $\vec{v_7}$ \\ \hline
    0 & 0 & 0 & 1 & 1 & 0 & 1 & 1 \\
    1 & 0 & 1 & 0 & 1 & 1 & 0 & 0 \\
    2 & \textbf1 & \textbf0 & \textbf0 & \textbf1 & \textbf0 & \textbf1 & \textbf0 \\
    3 & \textbf0 & \textbf0 & \textbf1 & \textbf0 & \textbf0 & \textbf1 & \textbf1 \\
    4 & 0 & 0 & 0 & 1 & 1 & 1 & 0 \\
    5 & 0 & 0 & 1 & 1 & 0 & 1 & 1 \\
    6 & \textbf0 & \textbf1 & \textbf0 & \textbf1 & \textbf1 & \textbf0 & \textbf0 \\
    7 & \textbf0 & \textbf0 & \textbf0 & \textbf1 & \textbf0 & \textbf1 & \textbf0 \\
    8 & 0 & 0 & 1 & 0 & 0 & 1 & 1 \\
    9 & 0 & 0 & 0 & 1 & 1 & 1 & 0 \\ \\
    $\sum$ & \textbf1 & \textbf1 & \textbf1 & \textbf3 & \textbf1 & \textbf3 & \textbf1
    \end{tabular}
    \end{center}
    
    The residues of $\vec{v_3}$ and $\vec{v_7}$ are equal. Therefore $\vec{v_k}\equiv\vec{v_{k+4}}\mod 2$ for all $k\ge 3$ and $\vec{v_{k-1}}$ in our present case must drop to one of the $6$ former residues, each of which has an odd total of $2$'s, $3$'s, $6$'s, and $7$'s.
    
    It follows $d_{k+2^{k-2}}(k)$ is odd, $d_{k+2^{k-1}}(k)$ is even, and $C_k$ starts by $n=k+1$ with length $l=2^{k-1}$. Parts (a) and (b) are proven. 
    
    We go on to (c) and (d) which both describe the content of $C_k$. We must determine precisely which patterns $C_k$ inherits from $C_{k-1}$. Let's start by describing how exactly $d_n(k-1)$ affects $d_{n+1}(k).$ We can create a useful table from Lemma 10.1:
    \begin{center}
    \begin{tabular}{c|c|c}
    & $d_{n+1}(k)$ when & $d_{n+1}(k)$ when \\
    $d_n(k-1)$ & $d_n(k)$ is even & $d_n(k)$ is odd \\ \hline
    0 & 0 & 5 \\
    1 & 0 & 5 \\
    2 & 1 & 6 \\
    3 & 1 & 6 \\
    4 & 2 & 7 \\
    5 & 2 & 7 \\
    6 & 3 & 8 \\
    7 & 3 & 8 \\
    8 & 4 & 9 \\
    9 & 4 & 9 \\
    \end{tabular}
    \end{center}
    
    This is actually enough information to determine precisely the contributions each digit of $C_{k-1}$ makes to $C_k$. Since $C_k$ is double the length of $C_{k-1}$, every entry of $C_{k-1}$ makes exactly two contributions to $C_k$. Further, each of the only two possible contributions (as described in the former table) are each made once. This is because the contribution depends on the parity of $d_n(k)$. And as was noted while proving parts (a) and (b), $d_k(k)$ and $d_{k+2^{k-2}}(k)$ have different parity. That is enough for part (c). We now must wrap up part (d).
    
    Every entry of $C_k$ makes two distinct contributions given in the former table. The entries of $\vec{v_k}$ are therefore linear in those of $\vec{v_{k-1}}$. The matrix $A$ given in the theorem statement can be seen to follow from the table. The vector $\vec{u}$ is chosen simply for the correct initial condition and can therefore be interpreted as a consequence of $C_1=(2)$.
\end{proof}
    
The theorem gives us more information than needed -- meaning there are potential trails to the summit. But we were looking for zeros in a row. To get back on track we need

\begin{lemma}
    $C_k$ begins with at least $\lfloor k\log_k 10 \rfloor -k$ zeros.
\end{lemma}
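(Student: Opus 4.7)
The plan is to translate the condition $d_n(k)=0$ into the simple inequality $5^n<10^k$: whenever $5^n$ has fewer than $k+1$ decimal digits, the coefficient of $10^k$ in its expansion $5^n=\sum_{j\ge 0}d_n(j)10^j$ must vanish. Taking base-$10$ logarithms, this is equivalent to $n<k/\log_{10}5 = k\log_5 10$ (the $\log_k$ in the statement must be read as $\log_5$, since otherwise the bound degenerates immediately). Because $\log_5 10$ is irrational, $k\log_5 10$ is never itself an integer, so the integers $n$ satisfying the strict inequality are exactly $0,1,\ldots,\lfloor k\log_5 10\rfloor$, and $d_n(k)=0$ on every one of them.

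Next I would invoke Theorem 10.2(a), which says that the cycle $C_k$ opens at index $n=k+1$. Intersecting this opening point with the zero-range just established gives $d_n(k)=0$ for every $n$ in the block $\{k+1,\,k+2,\,\ldots,\,\lfloor k\log_5 10\rfloor\}$, and these are precisely the consecutive leading entries of $C_k$. Counting indices in that block yields $\lfloor k\log_5 10\rfloor - k$ guaranteed opening zeros of $C_k$, which is exactly the claimed bound.

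The only delicate points are bookkeeping. One needs the inequality $5^n<10^k$ to be strict so that $d_n(k)$ really is zero rather than the leading digit of an eligible $5^n$; the irrationality of $\log_5 10$ supplies this for free. When $\lfloor k\log_5 10\rfloor \le k$ (the smallest few $k$) the stated bound is non-positive and the lemma holds vacuously. There is no substantive obstacle here; the lemma merely quantifies the evident fact that $5^n$ adds only $\log_{10}5\approx 0.70$ digits per step, so a gap of roughly $(\log_5 10-1)k \approx 0.43\,k$ indices sits between $n=k+1$, where the cycle begins, and the first $n$ at which $d_n(k)$ has any chance of being nonzero.
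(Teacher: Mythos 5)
Your proposal is correct and follows essentially the same route as the paper: observe that $5^n<10^k$ forces $d_n(k)=0$, take logarithms to get $n<k\log_5 10$, and combine with Theorem 10.2(a) (cycle starts by $n=k+1$) to count $\lfloor k\log_5 10\rfloor-k$ leading zeros. Your version is in fact slightly more careful than the paper's, which writes the key implication in the reverse (and strictly false) direction ``if $d_n(k)=0$ then $5^n<10^k$'' and does not note the irrationality point or the typo $\log_k$ for $\log_5$ that you correctly flag.
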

\begin{proof}
    This is really a fancy way of saying $10^k$ is bigger than $5^k$. If $d_n(k)=0$ then $5^n<10^k$. Or equivalently, then $n<k\log_5 10$. But since $n$ is an integer we may write
    $$n\le \lfloor k\log_5 10\rfloor .$$
    The theorem tells us $C_k$ begins by $n=k+1$. The cycle must therefore start with 
    $$\lfloor k\log_5 10\rfloor-(k+1)+1=\lfloor k\log_5 10\rfloor-k$$
    zeros.
\end{proof}

This finally gives us
\begin{theorem}
    The amount of zeros in a row in $5^{m+2^m+2}$ is non-decreasing and grows to infinity as $m$ gets bigger.
\end{theorem}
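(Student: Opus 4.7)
The strategy is to locate an explicit run of consecutive zeros inside $5^n$ for $n=m+2^m+2$ and read off its length. The key observation is that $n = (m+2) + 2^m$: the exponent differs from $m+2$ by $2^m$, which is a multiple of the cycle length $2^{k-1}$ of $S_k$ for every $k \le m+1$. This lets us identify certain high-position digits of $5^n$ with those of the much smaller number $5^{m+2}$, which are zero once the position exceeds the length of $5^{m+2}$.

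Concretely, I would fix $k$ in the range $\lfloor (m+2)\log_{10}5\rfloor + 1 \le k \le m+1$ and compute $(n-k-1) \bmod 2^{k-1}$. Since $k-1 \le m$ implies $2^m \equiv 0 \pmod{2^{k-1}}$, we get $n-k-1 \equiv m+1-k \pmod{2^{k-1}}$; the lower bound on $k$ forces $k$ to grow like $m\log_{10} 5$, so $2^{k-1}$ dwarfs $m+1-k$ and no further reduction is required. Theorem 10.2 then gives $d_n(k) = d_{(k+1)+(m+1-k)}(k) = d_{m+2}(k)$. By Lemma 4.2, $5^{m+2}$ has exactly $\lfloor (m+2)\log_{10}5\rfloor+1$ digits, hence $d_{m+2}(k)=0$ for every $k$ in our range. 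These consecutive positions therefore yield a run of
$$c(m) := (m+1) - \lfloor (m+2)\log_{10}5\rfloor$$
interior zeros inside $5^n$ (since $5^n$ has far more than $m+1$ digits, the block really sits in the interior, not at the leading end).

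To close, $c(m+1) - c(m) = 1 - \bigl(\lfloor (m+3)\log_{10}5\rfloor - \lfloor (m+2)\log_{10}5\rfloor\bigr) \in \{0,1\}$ because $\log_{10}5 < 1$, giving monotonicity; and $c(m) \ge (m+1) - (m+2)\log_{10}5 = (m+2)\log_{10} 2 - 1$, which tends to infinity, giving the growth claim. The heavy lifting was already done in Theorem 10.2; the only technical nuisance here is the routine size check that $m+1-k \in [0,2^{k-1})$ throughout the admissible range of $k$, so that $m+1-k$ is genuinely the residue mod $2^{k-1}$ rather than a quantity needing further reduction.
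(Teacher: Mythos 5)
Your proposal is correct and follows essentially the same route as the paper: both exploit Theorem 10.2's cycle length $2^{k-1}$ to identify the low-order digits of $5^{(m+2)+2^m}$ with those of $5^{m+2}$, and then read off the block of positions above the leading digit of $5^{m+2}$ (the paper packages this count as Lemma 10.3) as a run of interior zeros. You are somewhat more explicit than the paper in extracting the formula $c(m)=(m+1)-\lfloor(m+2)\log_{10}5\rfloor$ and verifying monotonicity and divergence, but the underlying argument is the same.
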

\begin{proof}
    $C_k$, by the theorem, cycles with $l=2^{k-1}$ by $n=k+1$. Thus $5^{k+1}$ and $5^{k+1+2^{k-1}}$ have the same last $k+1$ digits. $5^{k+1}$ will contain a zero in the $r$th place if $C_{k-r}$ begins with at least $r+1$ zeros -- which, by the previous lemma, we are guaranteed to happen for any $r$ by simply making $k$ big enough. Substituting $m=k-1$ gives the result (it looks a bit cleaner with $m$).
\end{proof}

We should look at our results in the broad daylight before concluding.
$$5^{8}= 39\mathbf{0}625$$
$$5^{13}= 12207\mathbf{0}3125$$
$$5^{22}= 2384185791\mathbf{0}15625$$
$$5^{39}= 181898940354585647583\mathbf{00}78125$$
$$5^{72}= 2117582368135750847670806251699104905128479\mathbf{00}390625$$
$$5^{137}= ...27\mathbf{00}1953125$$
$$5^{266}= ...51\mathbf{000}9765625$$
$$5^{523}= ...63\mathbf{000}48828125$$
$$5^{1036}= ...19\mathbf{000}244140625$$
$$5^{2061}= ...47\mathbf{000}1220703125$$
$$5^{4110}= ...11\mathbf{0000}6103515625$$
$$5^{8207}= ...43\mathbf{0000}30517578125$$
$$5^{16400}= ...59\mathbf{0000}152587890625$$
$$5^{32785}= ...67\mathbf{00000}762939453125$$
$$5^{65554}= ...71\mathbf{00000}3814697265625$$
$$5^{131091}= ...23\mathbf{00000}19073486328125$$
$$5^{262164}= ...99\mathbf{000000}95367431640625$$

\section{Philisophical Suspicions}
\begin{center}
    ``\textit{But, dear father, upon what grounds are you so opposed to belief in dreams}" \\
    -Novalis, \textit{Henry of Ofterdingen}
\end{center}

Both our goals have been reached (well -- technically we reached one goal and found an interesting dead end to another). The powers of five have been proven selfcondensable with our original rules. Any amount of zeros in a row can be guaranteed to appear in the powers of five at some point. Give yourself a pat on the back.

The author wanted at first to fill this section with philosophy -- with clever ideas slightly relevant to the preceding mathematics. They wanted to explain how true freedom is not a lack of restriction but is actually the selection of restriction. They would have said the best freedom consists of the funnest -- and often strictest -- rules.
    
The author wanted to point out how mystic language and jargon are indistinguishable at first glance -- how prophets, mathematicians, poets, and theologians often get ideas into their heads so big spoken language is too small to fit them. They would have mentioned the strange affinity of mathematics for religious experience and insanity. The author would have mentioned Ramanujan, Grothendieck, Cantor, or Nash as examples. They would have quoted G.K. Chesterton to you:
\begin{center}
    ``\textit{The poet only asks to get his head into the heavens. It is the logician who seeks to get the heavens into his head. And it is his head that splits.}" \\
    -\textit{Orthodoxy}
\end{center}

They would have mentioned the theologian Thomas Aquinas who said of his work ``\textit{it reminds me of straw}". They would have mentioned how great religious minds grappling with conceptual structures too large for their language are pushed back either into (1) paradox like Ezekiel:
\begin{center}
    ``\textit{As I looked at the living creatures, I saw a wheel on the ground beside each creature with its four faces. This was the appearance and structure of the wheels: They sparkled like topaz, and all four looked alike. Each appeared to be made like a wheel intersecting a wheel.}" \\
    -\textit{Ezekiel 1:15-16}
\end{center}
or (2) mysticism like John:
\begin{center}
    ``\textit{ I will also give that person a white stone with a new name written on it, known only to the one who receives it.}" \\
    -\textit{Revelation 2:17}
\end{center}
or (3) simply admit inability like Dante\footnote{The author apologizes that only examples from the Christian religion are provided. The author is largely ignorant of other traditions and is happy to receive communication from any reader who knows examples beyond the territory of Christendom.}:
\begin{center}
    ``\textit{ Had I a tongue in eloquence as rich,\\
As is the colouring in fancy's loom,\\
'T were all too poor to utter the least part\\
Of that enchantment.}" \\
    -\textit{Paradiso XXXI}
\end{center}

The author would have drawn out from the similarity between mystic language and jargon interesting conclusions about how humans think. But they found these philosophical suspicions too difficult to explain and therefore leaves them as trailheads to the real hikers.

\end{document}